 \definecolor{bananayellow}{rgb}{.9961, .8789, .2070}
 \colorlet{polygonfill}{bananayellow!50}
 \tikzset{
   focus-focus value/.pic={
     \draw [thick] (-.1, -.1) -- (.1, .1) (-.1, .1) -- (.1, -.1);
   }
 }
 \newcommand\blfootnote[1]{%
  \begingroup
  \renewcommand\thefootnote{}\footnote{#1}%
  \addtocounter{footnote}{-1}%
  \endgroup
}
\renewcommand{\thefootnote}{\alph{footnote}}
\begin{document}

\title{Semitoric systems of non-simple type}
\author{Joseph Palmer \,\,\,\,  \'Alvaro Pelayo \,\,\,\, Xiudi Tang}
\date{}

\begin{abstract}
Within integrable systems, the class of so called ``semitoric'' integrable systems in dimension four has attracted a lot of attention in recent years, especially since fundamental examples from classical and quantum mechanics have been identified as semitoric by different groups of researchers. 
Several of these examples, however, show a particular trait not included in the original theory, that is, the presence of multiple (i.e.~two or more) rank zero isolated singularities in the same energy-momentum level sets. 
Systems with this property are called non-simple. 
This paper extends the original theory of Pelayo and V\~u Ng\d{o}c to non-simple systems. 
\end{abstract}




\maketitle

\blfootnote{\,\,\emph{AMS primary codes:} 53D05, 53D20, 53D35, 37J35.}

\section{Introduction} \label{sec:intro}


In Hamiltonian dynamics there is a large class of dynamical systems known as ``(Liouville) integrable'', which are those systems that have a maximal number of functionally independent quantities that are conserved under the dynamics. We refer to~\cite{AHsurvey,PVNsurvey,Pelayo21,Pel22} for recent surveys from the perspective of symplectic geometry. Integrable systems play a prominent role in many parts of physics. Within this class of systems, there is a special subclass whose elements display a rotational symmetry on one of its components and are called ``semitoric'' for this reason. 

\subsection{Semitoric systems: motivation and classification}
Semitoric integrable systems have received a lot of attention in recent years, for at least three reasons:
\begin{itemize}
\item[(i)] they are a natural generalization of toric integrable systems~\cite{De88} in dimension four;
\item[(ii)] they display a mixture of rigidity and flexibility;
\item[(iii)] they model simple yet intriguing physical systems like the Jaynes-Cummings model~\cite{JaCu63}  and the coupled angular momenta, see for example~\cite{LFP2018}.
\end{itemize}

In dimension four, the Hamiltonian flows of the integrals in a toric integrable system induce a $\mathbb{T}^2$-action,
whose fixed points are exactly the elliptic-elliptic singular points of the system. The Hamiltonian flow of the integrals
in a semitoric integrable system generates an $(S^1\times \R)$-action, whose fixed points are either elliptic-elliptic singularities or
focus-focus singularities of the system.
The existence of these focus-focus singularities are the source of the interesting and complicated behaviors of semitoric integrable systems.

About ten years ago, semitoric integrable systems were classified by Pelayo--V\~u Ng\d{o}c~\cite{PVN2009,PVN2011} under a few assumptions, in terms of five invariants. One of these assumptions was ``simplicity'', meaning each 2-dimensional singular fiber contains at most one rank zero singularity (i.e.~at most one fixed point of the associated $(S^1\times \R)$-action). In recent years new examples of semitoric systems have surfaced, of great interest in classical and quantum mechanics, which are semitoric but unfortunately do not satisfy the simplicity assumption. The goal of this paper is to extend the classification to include these systems, hence broadening the practical impact of the classification on the physics and dynamics literature, and opening the door to a better understanding of examples in these areas. 

\subsection{Main result and example}
The main result of this paper is Theorem~\ref{thm:equiv-classification}, which extends the classification
of simple semitoric systems due to Pelayo--V\~u Ng\d{o}c~\cite{PVN2009, PVN2011} to also include non-simple semitoric systems by adapting the invariants.
Theorem~\ref{thm:equiv-classification} classifies semitoric systems, simple or not, in terms of a convex polygon decorated with marked points and certain labels on these points.
The proof of this theorem entails following the proofs of the original classification and carefully monitoring the effect of
the presence of multipinched fibers.
While the majority of the proof can be adapted (and hence why our paper is quite short), there is a subtlety in those parts where the twisting index plays a role, and which can already be seen when analyzing the behavior of this index for specific examples.
The construction of the analogue of the twisting index in the non-simple case is mixed with the construction of the Taylor series invariants, and so they must be packaged together in the classification.


In this paper we 
state and prove two equivalent versions of 
the complete classification for the non-simple case, which takes into account this more complicated version of the twisting index.
Firstly, we introduce an object called \emph{the complete semitoric invariant}, which packages together all of the invariants of a semitoric system into a single object. We show that 
\emph{the complete semitoric invariant classifies semitoric systems, simple
or not, up to isomorphisms;} 
this is the content of Theorem~\ref{thm:classification}, which we state and prove in Section~\ref{sec:classification} once we have introduced the necessary ingredients for the formulation.
Secondly, we introduce an object called a \emph{marked labeled semitoric polygon} and, making use of Theorem~\ref{thm:classification}, we prove Theorem~\ref{thm:equiv-classification}, which states that \emph{marked labeled semitoric polygons classify semitoric systems, simple
or not, up to isomorphisms}.

These classifications are equivalent, but we believe that both versions are of interest. The complete semitoric invariant 
shows some information in a clearer way (and introduces the idea of a \emph{wall-crossing index} in this context), but  the marked labeled semitoric polygon holds equivalent information and is somewhat easier to work with.
Furthermore, Theorem~\ref{thm:equiv-classification} is more similar to the original classification of simple systems.
Note that the polygons involved in the classification stated in Theorem~\ref{thm:classification} are not necessarily convex, but the polygons involved in the classification stated in Theorem~\ref{thm:equiv-classification} are always convex.

\subsection{Non-simple semitoric systems in physics and symplectic topology}
Symplectic geometry, although a mathematical subject on its own right, has its origins in classical mechanics and much of its development has been motivated by problems in classical mechanics, see Abraham--Marsden~\cite{AbMa}, De Le\'on--Rodrigues~\cite{dLRo}, Libermann--Marle~\cite{LiMa} and Marsden--Ratiu~\cite{MaRa} for books which emphasize this aspect of the subject.

In recent years several authors have discovered examples of non-simple semitoric 
systems, such as the system in~\cite{HohPal} which generalizes the coupled angular momentum 
system (see Section~\ref{sec:examples}), and the system in~\cite{DeMeulenaere-Hohloch} which, for
certain values of the parameters, has two focus-focus fibers which are each a double-pinched torus
containing two focus-focus points (so the system has four focus-focus points in total).

To be more concrete, following Hohloch--Palmer~\cite{HohPal}, the generalized coupled angular-momentum system on $\mathbb{S}^2\times \mathbb{S}^2$ with Hamiltonian \[J=z_1+z_2\] and first integral \[H=\frac{z_1+z_2+2x_1y_1+2x_2y_2}{4}\] has as energy-momentum level set $(J,H)^{-1}(0,0)$ a double pinched torus, depicted in Figure~\ref{fig:intro-HP}. This is a typical non-simple semitoric system,  and we will examine this example more closely in Section~\ref{sec:examples}.

\begin{figure}[ht]
  \centering
  \inputfigure{intro-HP}
  \caption{For certain values of the parameters in the generalized coupled angular momenta system from~\cite{HohPal} a double-pinched torus appears.}
  \label{fig:intro-HP}
\end{figure}

Furthermore, the system discussed in~\cite[Section 6]{Dawson-Dullin-Nguyen} contains a fiber which includes
two focus-focus points and in~\cite[Theorem 6]{Dawson-Dullin-Nguyen} the authors prove
that it is a so-called \emph{proper semitoric system}.
Proper semitoric systems are integrable systems $F=(J,H)\colon M\to\R^2$ which satisfy all conditions
to be semitoric except for the fact that while $F$ is proper, $J$ may not be.
Such systems were studied in~\cite{PRVN2017}, and examples
include the spherical pendulum.
The example from~\cite{Dawson-Dullin-Nguyen} provides further evidence that non-simple semitoric systems (in this case a 
non-simple proper semitoric system) appear in mechanics,
and for additional examples of semitoric systems in physics, see for instance~\cite{JaCu63,SaZi99,BD2015}.

Moreover, the study of singular foliations is crucial 
within the various forms of symplectic dynamics, see~\cite[Section 2]{BH-firststeps}.
Multipinched tori appear in mirror symmetry~\cite{GW1997} (we thank Mark Gross for discussions).  To get Lagrangian torus fibrations in mirror symmetry one can start with a K3 surface with an elliptic fibration and by hyperk\"ahler rotation turn it into a special Lagrangian fibration, the singular fibers of which can include the multipinched tori.

Toric fibrations with singularities, such as those that arise from semitoric systems, are important in the context
of mirror symmetry and algebraic geometry~\cite{GrSi2003, GrSi2006, GrSi2010, GrSi2011}, and symplectic geometry~\cite{LeSy2010, Zung1996, RaWaZu2018}.
Also, in~\cite{Vianna1,Vianna2,Vianna3} Vianna uses almost-toric fibrations, nodal trades, and nodal slides (as in~\cite{Sym}) to construct infinitely many non-Hamiltonian isotopic Lagrangian tori in $\C\mathrm{P}^2$ and monotone del Pezzo surfaces.

One could also consider questions related to the Hamiltonian displaceability or non-displaceability of fibers that arise in semitoric systems. 
For instance, in~\cite[page 8]{ElPo2010} the authors study the displaceability properties
of the regular (i.e.~torus) fibers in a specific semitoric system. Non-displaceability of a given fiber is related
to whether or not the fiber has unobstructed Floer cohomology, and recently there has been progress in
defining Floer theories for immersed Lagrangians submanifolds~\cite{AJ,PW1,PW2} which can have
self-intersection points. The pinched tori that show up in semitoric systems are examples of such immersed Lagrangians,
and thus, one could also begin study of displaceability properties of the focus-focus fibers (single-pinched or multi-pinched)
that arise in semitoric systems.

\subsection{Can one hear non-simple semitoric systems?}

The classification and invariants of non-simple semitoric systems which we present in this paper should be very useful in understanding whether one can or cannot ``hear'' non-simple semitoric systems (and to what extent, in the affirmative case), in the sense of being able or not being able to recover the classical system (of principal symbols) from the semiclassical spectrum of a quantum semitoric system. The term ``hear" in this context was popularized by  Kac’s famous article \emph{``Can you hear the shape of a drum?”} \cite{KAC}.

In other words, one would like to know whether the semiclassical spectrum completely determines or not the (isomorphism class) of the symplectic manifold and the semitoric system on it, that is, whether one can hear the semitoric system. The Pelayo--V\~{u} Ng\d{o}c conjecture (Pelayo--V\~{u} Ng\d{o}c \cite[Conjecture 9.1]{PVNsteps}) states that the answer is yes for simple systems, that is, \emph{one can hear simple semitoric systems}. 

The conjecture has now been shown by the work of the following authors: Pelayo--V\~{u} Ng\d{o}c~\cite{PVN14} and Le Floch--Pelayo--V\~{u} Ng\d{o}c~\cite[Theorem A]{LFPVN2016} proved that one can hear the first four invariants of simple semitoric systems, and recently Le Floch--V\~{u} Ng\d{o}c~\cite{LFVN} proved that one can hear the fifth invariant, the twisting index. Since by the work of Pelayo--V\~{u} Ng\d{o}c semitoric systems are classified by these five invariants, the conjecture follows. In addition Le Floch--V\~{u} Ng\d{o}c~\cite{LFVN} gave algorithms to recover the invariants.

The following question remains: when the semitoric system is not simple, can we still hear it? And if not always, then in which cases? The answers are unknown, but we expect the symplectic invariants of non-simple semitoric systems that we define in this paper are important to understand this problem. We refer to Section~\ref{sec:quantum} for further details and references.

In~\cite{Pel22} the second author proposes a collection of open problems concerning classical and quantum integrable systems, including an existence problem, sometimes also referred to as a ``surjectivity statement" for quantum semitoric systems~\cite[Problem 12.5]{Pel22}.
Non\--simple semitoric systems are likely to give, at a quantum level, sources of examples of semitoric systems which have equivalent semiclassical spectra despite being non-isomorphic at the classical level (see Section~\ref{sec:recover-from-affine} for a related discussion). Several open problems in this direction are suggested by the second author in~\cite[Sections 12.7 and 12.9]{Pel22}.

\subsection{Structure of the paper}

In Section~\ref{sec:invariants} we describe the required background and describe how to construct a set of invariants from a given semitoric system, simple or not.

In Section~\ref{sec:definition} we define the \emph{complete semitoric invariant} and prove that the map assigning such an invariant to a semitoric system is well-defined up to isomorphism (Proposition~\ref{prop:welldef}).

In Section~\ref{sec:classification} we prove that the map assigning the complete semitoric invariant to a semitoric system is injective (Proposition~\ref{prop:uniqueness}) and surjective (Proposition~\ref{prop:existence}).
In Section~\ref{sec:classification} we also state and prove Theorem~\ref{thm:classification}.

In Section~\ref{sec:equiv-classification} we explain an equivalent version of this classification which is more consistent with the original classification of semitoric systems, and making use of Theorem~\ref{thm:classification} we state and prove the main result of the paper, Theorem~\ref{thm:equiv-classification}.

In Section~\ref{sec:twist} we discuss the behavior of the twisting index invariant of a non-simple semitoric system.

In Section~\ref{sec:examples} we explicitly discuss an example of a non-simple semitoric system which is a special case of the class of systems studied in~\cite{HohPal}.

In Section~\ref{sec:final} we give final remarks and discuss various open problems related to non-simple semitoric systems.
 
\section{Preliminaries} \label{sec:invariants}

Let $(M, \om)$ be a connected symplectic $4$-manifold.
Integrable systems $(M, \om, F \colon M \to \R^2)$ with non-degenerate singularities can have critical points of six types: elliptic-regular, hyperbolic-regular, elliptic-elliptic, elliptic-hyperbolic, hyperbolic-hyperbolic, and focus-focus.
In this paper we do not consider systems which have singularities of elliptic-hyperbolic, hyperbolic-hyperbolic, or hyperbolic-regular type.
A critical point $p$ of $F$ is a \emph{focus-focus point} if there are local coordinates $(x_1,\xi_1,x_2,\xi_2)$ 
and a local diffeomorphism $g\colon \R^2\to\R^2$ such that $\om = \der x_1 \wedge\der \xi_1 + \der x_2 \wedge \der \xi_2$, $p = (0,0,0,0)$, and
\begin{equation} \label{eq:FFnormal}
  g\circ F = (x_1\xi_2-x_2\xi_1, x_1 \xi_1 +x_2 \xi_2).
\end{equation}
The local models for elliptic-regular and elliptic-elliptic points are, respectively, 
\[
 g \circ F = \left( \frac{x_1^2 + \xi_1^2}{2}, \xi_2 \right) \quad \textrm{ and } \quad g \circ F = \left( \frac{x_1^2 +\xi_1^2}{2}, \frac{x_2^2 + \xi_2^2}{2}\right).
\]
A \emph{focus-focus fiber} is any fiber of $F$ which contains at least one focus-focus point and all critical points in the fiber are
focus-focus points; topologically these fibers are $2$-tori pinched once for each focus-focus point.


A rich class of systems $(M,\om,F=(J,H))$ having these types of singularities are those called \emph{semitoric}, which means that $J$ is a proper function whose Hamiltonian flow is $2\pi$-periodic and $F$ has only non-degenerate singularities of these three types, elliptic-regular, elliptic-elliptic, and focus-focus (i.e.~no hyperbolic components).  
\begin{definition}
 Two semitoric systems $(M_i,\om_i,F_i = (J_i,H_i))$, $i \in \{1,2\}$, are
 \emph{isomorphic} if there exists a symplectomorphism $\phi \colon M_1 \to M_2$ such that $\phi^*(J_2,H_2) = (J_1, f(J_1,H_1))$ for some smooth function $f(x,y)$ such that $\frac{\partial f}{\partial y}>0$ everywhere. 
\end{definition}
\begin{definition}
 A semitoric system is \emph{simple} if each fiber of $J$ (and hence also each fiber of $F$) contains at most one focus-focus point.
 \end{definition}
 So if $F$ is simple then the focus-focus fibers of $F$ are homeomorphic to once-pinched tori as in $F^{-1}(c_1)$ in Figure~\ref{fig:fibers}.

\subsection{Polygons via cutting at focus-focus points} \label{ssec:semitoricpolygon}
Let $(M, \om, F = (J,H))$ be a semitoric system and let $B= F(M)$.

\begin{definition}
Let $M_{\mathrm{f}} \subset M$ be the set of \emph{focus-focus points} of $F$, that is, the critical points of $F$ with local model given by expression~\eqref{eq:FFnormal}.
A \emph{focus-focus value} is an element of $F(M_{\mathrm{f}})$. 
\end{definition}

By~\cite[Corollary 5.10]{VN2007} $M_{\mathrm{f}}$ is finite.
Let $m_{\mathrm{f}}, \vf, \lamf \in \Z_{\geq 0}$ respectively be the cardinalities of $M_{\mathrm{f}}$, $F(M_{\mathrm{f}})$, and $J(M_{\mathrm{f}})$.
Let $B_{\mathrm{r}} \subset B$ denote the set of regular values of $F$.

Arrange the focus-focus values lexicographically by $(x, y) < (z, t)$ if and only if $x < z$ or both $x = z$ and $y < t$, and label them $\{c_1,\ldots,c_\vf\}=F(M_\mathrm{f})$.
For $i \in \Seq{1, \dotsc, m_{\mathrm{f}}}$, let $m_i$ be the number of focus-focus points in $F^{-1}(c_i)$, known as the \emph{multiplicity} of the focus-focus value $c_i$, so $\sum_{i = 1}^\vf m_i = m_{\mathrm{f}}$, see Figure~\ref{fig:fibers}.
By~\cite{VN2007}, the fibers of $F$ are connected, so we may associate $F(M)$ with the base of the Lagrangian fibration
on $M$ induced by $F$, which, by a result of Duistermaat~\cite{Duis1980}, inherits the structure of an integral affine manifold with corners and nodes in the following way.

\begin{definition}
For $b\in B$ each $\beta\in T^*_b\R^2$ determines a vector field $\mathcal{X}_\beta$ on $F^{-1}(b)\subset M$ 
via the equation $F^*\beta = -\omega(\mathcal{X}_\beta,\cdot)$,
and we let $\beta\in T^*_b\R^2$ act on $F^{-1}(b)$ by flowing along $\mathcal{X}_\beta$ for time $1$.
For $b \in B$ let $2\pi \Lambda_b \subset T^*_b\R^2$ be the isotropy subgroup of the action and let $\Lambda = \coprod_{b \in B} \Lambda_b$, then $(B,\Lambda)$ is an integral affine manifold with corners and nodes, as in~\cite{Duis1980}.
We call $(B,\Lambda)$ the \emph{base of $(M,\om,F)$}.
\end{definition}
Let $\Periodcan$ be the usual integral affine structure on $\R^2$. 
The focus-focus values create monodromy in the integral affine structure of $(B,\Lambda)$ and obstruct any global affine map 
$(B,\Lambda) \to (\R^2,\Periodcan)$, but we can define a map which is affine when restricted to each vertical region between the focus-focus values as follows.

\begin{proposition}\label{prop:affinecoords}
Let $(M,\omega,F=(J,H))$ be a semitoric system. Arrange the
elements of the set  $J(M_{\rm f})=\{j_1,\ldots,j_{\lambda_{\rm f}}\}$
in strictly increasing order, that is, $j_1<\ldots<j_\lamf$. 
Define $j_0=-\infty$ and $j_{\lambda_{\rm f}+1}=+\infty$. For each $a \in \{0,\ldots,\lambda_{\rm f}\}$ let $I_a=\{(x,y) \in \mathbb{R}^2 \,|\,
j_a<x<j_{a+1}\}$ and let $B_a=B_{\rm r}\cap I_a$. Then there exists
an injective, orientation preserving, continuous function $\mathcal{A} \colon B \to \mathbb{R}^2$ with the following properties:
\begin{enumerate}[label={\textup{(\arabic*)}}]
\item $\mathcal{A}$ preserves the $x$\--coordinate;
\item for every $a \in \{0,\ldots,\lambda_{\rm f}\}$ the restriction $\mathcal{A}|_{B_a}$ of $\mathcal{A}$ to $B_a$ is smooth;
\item $(\mathcal{A}|_{B_a})^*\Lambda_{\rm can}=\Lambda$, where $\Lambda_{\rm can}$ is the usual affine structure of $\mathbb{R}^2$.
\end{enumerate}
\end{proposition}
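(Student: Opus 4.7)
The plan is to construct $\mathcal{A}$ as a global cartographic map, adapting V\~{u} Ng\d{o}c's construction~\cite{VN2007} to allow multiple focus-focus points on a single vertical fiber of $J$. The crucial input is that, because $J$ has $2\pi$-periodic Hamiltonian flow, at every regular value $b$ the period lattice $\Lambda_b \subset T^*_b \R^2$ contains a primitive horizontal element proportional to $dx|_b$; completing it to a positively oriented integral basis of $\Lambda_b$ will allow $\mathcal{A}$ to preserve the $x$-coordinate.

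For each $a \in \{0, \ldots, \lamf\}$, I would first argue that $B_a$ is open and simply connected: $B$ is simply connected for semitoric systems (a standard fact, the base being topologically a disk), and $B_a$ is its intersection with a vertical open strip free of focus-focus values, hence with no monodromy obstruction. On this simply connected integral affine manifold there exists a smooth section $\sigma_a \colon B_a \to \Lambda$ such that $(dx, \sigma_a(b))$ is a positive integral basis of $\Lambda_b$ fiberwise; the $1$-form $\sigma_a$ is closed and so admits a primitive $K_a \colon B_a \to \R$, unique up to an additive constant. The map $\mathcal{A}_a := (J, K_a) \colon B_a \to \R^2$ is then a smooth orientation-preserving local diffeomorphism preserving the $x$-coordinate with $\mathcal{A}_a^* \Lambda_{\mathrm{can}} = \Lambda$. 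Strict monotonicity of $K_a$ along each vertical fiber $J^{-1}(x_0) \cap B_a$ (which is connected in $B_a$), together with its smoothness, upgrades this to injectivity on each $B_a$.

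The main obstacle is to glue the pieces $\mathcal{A}_a$ into a single continuous function on $B$. The essential algebraic fact is that the monodromy of $\Lambda$ around any focus-focus value $c$ with $\mathcal{A}(c) = (j_a, y_c)$, expressed in the basis $(dx, \sigma)$, is unipotent of vertical-shear type, so on the developed plane it acts by $(x, y) \mapsto (x, y + k(x - j_a))$ for some positive integer $k$, which fixes the whole vertical line $\{x = j_a\}$ pointwise. Inductively, given $\mathcal{A}_0, \ldots, \mathcal{A}_{a-1}$, I would define $\mathcal{A}_a$ by analytically continuing $\mathcal{A}_{a-1}$ across $\{x = j_a\}$ along a single non-focus-focus segment and spreading the result over $B_a$ by simple connectivity; the one-sided limits of $\mathcal{A}_{a-1}$ and $\mathcal{A}_a$ on any other non-focus-focus segment of $\{x = j_a\} \cap B_{\mathrm{r}}$ differ a priori by a product of the above shears, but since each such shear fixes $\{x = j_a\}$ pointwise, so does the product, and the two limits coincide. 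This extends $\mathcal{A}$ continuously across every $\{x = j_a\}$; continuity at the focus-focus values and at elliptic boundary critical values follows by standard boundedness and local-normal-form arguments, and global injectivity follows from local injectivity combined with the coincidence of the one-sided limits on each $\{x = j_a\}$. The step requiring the most care, and the genuinely new feature of the non-simple case, is verifying that a product of vertical shears with distinct centers on a common vertical line still fixes that whole line pointwise regardless of the order of composition.
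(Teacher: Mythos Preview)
Your proposal is correct and follows essentially the same strategy as the paper: construct local affine charts $\mathcal{A}_a$ on each simply connected strip $B_a$ using the $2\pi$-periodicity of $J$, then glue them across the vertical lines $\{x=j_a\}$; the paper's proof is in fact terser than yours, deferring the gluing step to \cite[Theorem~3.8]{VN2007}. Your explicit articulation of why the gluing succeeds in the non-simple case---that each monodromy shear, and hence any product of them, fixes the line $\{x=j_a\}$ pointwise---is precisely the point that must be checked, and your account spells this out where the paper leaves it implicit.
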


\begin{proof}
For each $a\in\{0,\ldots,\lamf\}$, by~\cite[Theorem 3.4]{VN2007} $B_a$ is a simply connected subset of $B_{\mathrm{r}}$. Thus,
since the Hamiltonian
flow of $J$ is $2\pi$--periodic there exists an
orientation preserving map $\mathcal{A}_a \colon B_a \to \R^2$ which preserves the first coordinate and for which $\mathcal{A}_a^* \Periodcan = \Lambda$, unique up to vertical translation and composition with powers of 
\begin{equation}\label{eqn:T}
 T = \begin{pmatrix} 1 & 0\\1 & 1 \end{pmatrix}.
\end{equation}
Exactly as in~\cite[Theorem 3.8]{VN2007}, after vertically translating if needed the maps $\mathcal{A}_a$, $a\in\{0,\ldots,\lambda_\mathrm{f}\}$, can be piecewise combined to a single continuous map $\mathcal{A} \colon B \to \R^2$ which is the desired map.
\end{proof}

\begin{definition}\label{def:affinecoords}
  An injective, orientation preserving, continuous function $\mathcal{A} \colon B \to \R^2$ is  a \emph{choice of piecewise affine coordinates} if it
  satisfies properties (1)--(3) from Proposition~\ref{prop:affinecoords}.
\end{definition}

Also note if $\mathcal{A}$ is a choice of piecewise affine coordinates then it preserves the lexicographic order.
  Let $\proj_i \colon \R^2 \to \R$, $i \in \{1, 2\}$ be the projection onto the $i^{\mathrm{th}}$ component and
  for $j \in \R$ let $\ell_j = \proj_1^{-1}(j)$. 

\begin{definition} \label{def:VPIA-group}
  For a finite set $\mathbf{j} \subset \R$  let $G_\mathbf{j}$ denote the \emph{vertical piecewise integral affine group}, that is, the group of homeomorphisms $\rho \colon \R^2 \to \R^2$ which preserve the first component and for which $(\rho|_{\R^2 \setminus \bigcup_{j \in \mathbf{j}}\ell_j})^*\Periodcan = \Periodcan$.
\end{definition}

The following two results are now immediate.

\begin{proposition}\label{prop:generators}
For $x\in\R$ denote by $\Heaviside_x$ the function which is $1$ when $x \geq 0$ and $0$ otherwise, and for every $j,b \in \R$ define the homeomorphisms $\mathsf{t}_j$ and $\shift_b$ of $\R^2$ by 
\begin{equation}\label{eqn:t_and_shift}
 \mathsf{t}_j(x, y) = (x, y + (x-j)\Heaviside_{(x-j)}), \qquad \shift_b(x, y) = (x, y + b).
\end{equation}
Then, $G_\mathbf{j}$ is the Abelian group generated by $T$, $\Seq{\mathsf{t}_j}_{j \in \mathbf{j}}$, and $\{\shift_b\}_{b\in\R}$, and is thus canonically isomorphic to $\Z^{\lambda_{\mathrm{f}} + 1} \times \R$, where $\lambda_{\mathrm{f}}$ is the cardinality of $\mathbf{j}$.
\end{proposition}

\begin{lemma}
  Let $(M, \om, F)$ be a semitoric system with base $(B,\Lambda)$ and let $\mathbf{j} = (j_1, \dotsc, j_\lamf)$ be the $\lamf$-tuple of images of the focus-focus points of $F$ under $J$.
  Then there is a choice of piecewise affine coordinates on $(B, \Lambda)$ which is unique up to left composition by an element of $G_\mathbf{j}$.
\end{lemma}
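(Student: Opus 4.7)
The plan is to obtain existence directly from Proposition~\ref{prop:affinecoords} and to establish uniqueness by analyzing the transition between two choices. The easy direction --- that if $\mathcal{A}$ is a choice of piecewise affine coordinates and $g \in G_{\mathbf{j}}$ then $g \circ \mathcal{A}$ is again such a choice --- follows immediately from Definition~\ref{def:VPIA-group}: each generator $T$, $\mathsf{t}_j$, and $\shift_b$ is an orientation preserving homeomorphism of $\R^2$ preserving the first coordinate, and off the lines $\ell_j$ they pull back $\Periodcan$ to itself, so precomposing with $g$ preserves properties (1)--(3) of Definition~\ref{def:affinecoords}.

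For the converse, given two choices $\mathcal{A}$ and $\mathcal{A}'$ I would set $\rho = \mathcal{A}' \circ \mathcal{A}^{-1}$ on $\mathcal{A}(B)$ and show that $\rho$ agrees on its domain with a global element of $G_{\mathbf{j}}$. Fix $a \in \{0, \dotsc, \lamf\}$. Since $\mathcal{A}|_{B_a}$ and $\mathcal{A}'|_{B_a}$ are both smooth, orientation preserving, first-coordinate preserving, and satisfy $(\mathcal{A}|_{B_a})^*\Periodcan = \Lambda = (\mathcal{A}'|_{B_a})^*\Periodcan$, the restriction $\rho|_{\mathcal{A}(B_a)}$ is an orientation preserving homeomorphism between open subsets of the vertical strip $I_a$ that preserves the first coordinate and pulls back $\Periodcan$ to itself. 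By the classification recalled in the proof of Proposition~\ref{prop:affinecoords} (such maps are unique up to vertical translation and composition with powers of $T$), this forces $\rho|_{\mathcal{A}(B_a)} = T^{k_a} \shift_{b_a}$ for some $k_a \in \Z$ and $b_a \in \R$.

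The final step is to patch these strip-wise descriptions into one element of $G_{\mathbf{j}}$. Continuity of $\mathcal{A}$ and $\mathcal{A}'$ across each focus-focus line $\ell_{j_i}$ forces agreement of the one-sided limits of $\rho$ at $x = j_i$, which yields the relation $b_i - b_{i-1} = (k_{i-1} - k_i) j_i$ for each $i \in \{1, \dotsc, \lamf\}$. A direct computation using~\eqref{eqn:t_and_shift} then shows that the composition
\[
 \rho_G = T^{k_0} \circ \shift_{b_0} \circ \mathsf{t}_{j_1}^{k_1 - k_0} \circ \cdots \circ \mathsf{t}_{j_\lamf}^{k_\lamf - k_{\lamf - 1}}
\]
acts on each strip $I_a$ by $(x, y) \mapsto (x, k_a x + y + b_a)$, so $\rho_G \in G_{\mathbf{j}}$ and $\mathcal{A}' = \rho_G \circ \mathcal{A}$. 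The main obstacle is this last bookkeeping: the Heaviside jumps produced by the generators $\mathsf{t}_{j_i}$ must reproduce exactly the prescribed slope changes $k_i - k_{i-1}$ together with the compensating intercept shifts at each $\ell_{j_i}$, and it is precisely the continuity relation above that guarantees the prescribed pair $(k_a, b_a)$ is matched simultaneously on every strip.
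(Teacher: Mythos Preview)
Your proof is correct and is precisely the detailed argument underlying the paper's treatment: the paper declares this lemma ``immediate'' after Proposition~\ref{prop:affinecoords} and the description of $G_\mathbf{j}$, giving no further proof. Your strip-by-strip identification of $\rho$ with $T^{k_a}\shift_{b_a}$ and the continuity-based patching into a single element of $G_\mathbf{j}$ is exactly the bookkeeping the paper leaves to the reader.
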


Let $\mathcal{A}$ be a choice of piecewise affine coordinates 
and, similarly to~\cite{VN2007, PVN2009},
let $\Delta = \mathcal{A}(B) \in \mathrm{Polyg}(\R^2)$, as in Figure~\ref{fig:complete-invariant},
where $\mathrm{Polyg}(\R^2)$ is the set of closed polygons in $\R^2$. 
For $i\in\{ 1,\ldots, \vf\}$, let
$\tilde{c}_i = \mathcal{A}(c_i)$.

For $a\in \{1, \ldots, \lamf\}$, let $s_a \in \Z_{>0}$ be the number of focus-focus values in the line $\ell_{j_a}$ and let $r_a \in \Z_{>0}$ be the lowest index such that $c_{r_a} \in \ell_{j_a}$.
Then $\Seq{\tilde{c}_{r_a}, \dotsc, \tilde{c}_{r_a+s_a-1}} = (\mathcal{A}\circ F(M_{\mathrm{f}})) \cap \ell_{j_a}$, and write $\tilde{c}_i = (\tilde{c}_i^1, \tilde{c}_i^2)$ for the components of $\tilde{c}_i$.
The vertical line $\ell_{j_a}$ is separated into $(s_a + 1)$ segments by the images under $\mathcal{A}$ of the focus-focus values,
\[
  \ell_{j_a}^\alpha = \Set{(x,y) \in \ell_{j_a} \mmid \tilde{c}_{r_a+\alpha-1}^2 < y < \tilde{c}_{r_a+\alpha}^2},
\]
for $\alpha \in \Seq{0, \dotsc, s_a}$, above taking $\tilde{c}_{r_a-1}^2 = -\infty$ and $\tilde{c}_{r_a+s_a}^2 = \infty$, so that
$
 \ell_{j_a} = \bigcup_{\alpha = 0}^{s_a} \overline{\ell_{j_a}^\alpha},
$ 
see Figure~\ref{fig:wall-crossing}.
Using the exact same argument as~\cite[Theorem 3.8]{VN2007}, we have the following.
\begin{proposition}
For each $a\in\{1,\ldots,\lambda_{\mathrm{f}}\}$ and $\alpha\in\{0,\ldots,s_a\}$ there exists a $w_a^\alpha\in\Z$ such that
\begin{equation} \label{eq:limitT}
  \lim_{\substack{x \to j_a\\ x<j_a}} \der \mathcal{A}(x, y) = T^{w_a^\alpha} \circ \lim_{\substack{x \to j_a\\ x>j_a}} \der \mathcal{A}(x, y),
\end{equation}
for any $y$ such that $\mathcal{A} (j_a, y) \in \ell_{j_a}^\alpha$.
\end{proposition}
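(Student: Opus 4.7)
The plan is to use that, away from focus-focus values, the integral affine manifold $(B_{\mathrm{r}}, \Lambda)$ is a genuine smooth integral affine manifold, so the wall $\ell_{j_a}$ at a point in the complement of the focus-focus values is an artifact of the piecewise construction of $\mathcal{A}$ in Proposition~\ref{prop:affinecoords} rather than a singularity of $\Lambda$ itself. Consequently the jump in the differential of $\mathcal{A}$ across such a wall segment must be given by an orientation-preserving element of $\mathrm{SL}(2,\Z)$ fixing the first coordinate direction, which forces it to be a power of $T$.

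First I would fix $a \in \{1, \ldots, \lambda_{\mathrm{f}}\}$ and $\alpha \in \{0, \ldots, s_a\}$ and pick any $y$ with $\mathcal{A}(j_a, y) \in \ell_{j_a}^\alpha$. By the definition of $\ell_{j_a}^\alpha$, the point $(j_a, y)$ lies in $B_{\mathrm{r}}$ and is not a focus-focus value, so there is a simply connected open neighborhood $U \subset B_{\mathrm{r}}$ of $(j_a, y)$ disjoint from $F(M_{\mathrm{f}})$. Using the local affine model of $\Lambda$ on $B_{\mathrm{r}}$, the smooth restrictions $\mathcal{A}|_{B_{a-1} \cap U}$ and $\mathcal{A}|_{B_a \cap U}$ each extend uniquely to smooth, orientation-preserving, first-coordinate-preserving maps $\mathcal{A}^{-}, \mathcal{A}^{+} \colon U \to \R^2$ satisfying $(\mathcal{A}^{\pm})^* \Lambda_{\mathrm{can}} = \Lambda|_U$.

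Next I would compare the two extensions. The map $g = \mathcal{A}^{+} \circ (\mathcal{A}^{-})^{-1}$ is a smooth, orientation-preserving, first-coordinate-preserving self-homeomorphism of an open subset of $\R^2$ pulling $\Lambda_{\mathrm{can}}$ back to itself; hence its differential at every point lies in the subgroup of $\mathrm{SL}(2, \Z)$ fixing $(1, 0)$, which is precisely $\{T^w : w \in \Z\}$. Thus there exists $w \in \Z$ with $\der \mathcal{A}^{+}(j_a, y) = T^w \circ \der \mathcal{A}^{-}(j_a, y)$, and the left and right limits in~\eqref{eq:limitT} are precisely $\der \mathcal{A}^{-}(j_a, y)$ and $\der \mathcal{A}^{+}(j_a, y)$.

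Finally I would show that $w$ does not depend on the chosen $y$. The assignment $y \mapsto w$ is locally constant because the extensions $\mathcal{A}^{\pm}$ vary smoothly along the wall, and the set of $y$ with $\mathcal{A}(j_a, y) \in \ell_{j_a}^\alpha$ is a connected interval; a continuous $\Z$-valued function on a connected interval is constant, giving the desired $w_a^\alpha$. The main obstacle is justifying the smooth extension step: it requires showing that $\Lambda$ has no genuine wall along $\ell_{j_a}^\alpha$, which uses that $U$ avoids the focus-focus values so that no monodromy obstructs piecing the two one-sided charts together into a single affine chart on $U$.
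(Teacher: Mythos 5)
Your proof is correct, and it matches the argument that the paper leaves implicit by citing~\cite[Theorem 3.8]{VN2007} (the paper gives no explicit proof of this proposition, relying instead on the construction in Proposition~\ref{prop:affinecoords}, where each $\mathcal{A}_a$ is already shown to be unique up to vertical translations and powers of $T$). Your key observation — that a segment $\ell_{j_a}^\alpha$ is a \emph{fake} wall for $\Lambda$ because it avoids the focus-focus values, so the one-sided restrictions of $\mathcal{A}$ extend smoothly to a full neighborhood and can be compared — is exactly the right mechanism, and the transition $g=\mathcal{A}^{+}\circ(\mathcal{A}^{-})^{-1}$ being an orientation-preserving, first-coordinate-preserving automorphism of $(\R^2,\Lambda_{\rm can})$ correctly forces $\der g \in \{T^w : w\in\Z\}$ (first row $(1,0)$ plus determinant $1$ in $\mathrm{SL}(2,\Z)$). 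The final step, local constancy of $w$ combined with connectedness of $\{y : \mathcal{A}(j_a,y)\in\ell_{j_a}^\alpha\}$, is also correct. One small wording nit: "subgroup of $\mathrm{SL}(2,\Z)$ fixing $(1,0)$" should more precisely read "preserving the first coordinate function" (equivalently, first row equal to $(1,0)$); $T$ does not literally fix the vector $(1,0)^{\top}$.
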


\begin{figure}[htb]
  \centering
  \inputfigure{wall-crossing}
  \caption{A neighborhood in $\De$ of a single line $\ell_{j_a}$ which includes three marked points, so $s_a = 3$.
  The line $\ell_{j_a}$ is separated into four parts $\ell_{j_a}^0$, $\ell_{j_a}^1$, $\ell_{j_a}^2$, and $\ell_{j_a}^3$ by the marked points, and each is labeled to the left by the wall-crossing index.
  Note that the multiplicity labels on the marked points determine the difference in wall-crossing index above and below them.}
  \label{fig:wall-crossing}
\end{figure}

\begin{definition}\label{def:wallcrossing}
  We call $w_a^\alpha$ the \emph{wall-crossing index of the line segment
  $\ell_{j_a}^\alpha$} and we call $w_a^{0},\ldots,w_a^{s_a}$ the
  \emph{wall-crossing indices of the line $\ell_{j_a}$}. Furthermore, we
  call $w_a = w_a^0 \in \Z$ the \emph{lower wall-crossing index} associated to the line $\ell_{j_a}$ since it describes the wall-crossing along the lowest segment of $\ell_{j_a}$.
\end{definition}

Due to the monodromy effect of focus-focus values on the affine structure of $B_{\mathrm{r}}$, these integers are subject to 
$
 w_a^{\alpha+1} - w_a^\alpha = m_{r_a+\alpha}.
$
Hence the lower wall-crossing index $w_a$ and the multiplicities of the focus-focus fibers (i.e.~the number of focus-focus points in the fiber) determine the tuple $(w_a^0, \dotsc, w_a^{s_a}) \in \Z^{s_a+1}$.
This is illustrated in Figure~\ref{fig:wall-crossing}.
Let $\mathrm{Vert}(\R^2) = \Set{\ell_j \mmid j \in \R}$.

\paragraph{Conclusion:} For each choice of piecewise affine coordinates $\mathcal{A}$ as in Definition~\ref{def:affinecoords} we have obtained:
\begin{itemize}[nosep] 
 \item the polygon $\De \in \mathrm{Polyg}(\R^2)$ endowed with the lines $\ell_{j_1}, \dotsc, \ell_{j_{\lamf}} \in \mathrm{Vert}(\R^2)$, each labeled with a lower wall-crossing index $w_1, \dotsc, w_\lamf \in \Z$ as in Definition~\ref{def:wallcrossing}, and
 \item the points $\tilde{c}_1, \dotsc, \tilde{c}_\vf \in \mathrm{int}(\De) \cap \left(\bigcup_{a = 1}^\lamf\ell_{j_a}\right)$ which are the images under $\mathcal{A}$ of $\{c_1, \dotsc, c_\vf \} =  F(M_{\mathrm{f}})$, each labeled with a multiplicity $m_1,\ldots,m_{v_\mathrm{f}} \in \Z_{>0}$ which is the number of focus-focus points in the corresponding fiber.
\end{itemize}
See Figures~\ref{fig:complete-invariant} and~\ref{fig:fibers}.

\begin{remark}
The polygon $\De$ generalizes the notion of weighted polygon
of complexity $\lamf$ in \cite[Definition 4.4]{PVN2009},
which applied to simple systems.
Furthermore, the \emph{cartographic map} introduced in~\cite{PRVN2017} is a 
special case of a choice of affine coordinates as in Definition~\ref{def:affinecoords}.
\end{remark}

\begin{remark}
The idea of cutting the base used above was applied in~\cite{Sym} to almost-toric systems, and in~\cite{VN2007,PVN2009, PVN2011,PRVN2017,LFP2018} to semitoric systems.
A study of the case of semitoric systems on log-symplectic/b-symplectic manifolds
has been recently  initiated in~\cite{BHMM}.
\end{remark}

\subsection{Taylor expansions at focus-focus points} \label{sec:taylorseries}

Consider a focus-focus value $c_i \in F(M_\mathrm{f})$ and let $(p^i_\mu)_{\mu \in \Z_{m_i}} \subset F^{-1}(c_i)$ be the tuple of focus-focus points in the fiber over $c_i$ for each $i\in\{1,\ldots,v_\mathrm{f}\}$.
Choose the points to be in order according to the direction of the flow of $H$, so the choice of numbering is unique up to cyclic permutation, which is why we take the index $\mu$ to be in $\Z_{m_i}= \Z/m_i\Z$ as in~\cite{PT}.

By Eliasson's linearization theorem for non-degenerate focus-focus points~\cite{eliasson1984hamiltonian, MR3098203}, near $p^i_\mu$ and $c_i$ there is an orientation preserving symplectomorphism 
\[
 \varphi^i_\mu \colon (M, \omega, p^i_\mu) \to (\R^4, \omega_0, 0)
\]
and an orientation preserving diffeomorphism $E^i_\mu \colon (B, c_i) \to (\R^2, 0)$ such that $q \circ \varphi^i_\mu = E^i_\mu \circ F$ (see Figure~\ref{fig:diagram}), where 
\[
 q(x_1,\xi_1,x_2,\xi_2) = ( x_1\xi_2-x_2\xi_1, x_1\xi_1 + x_2\xi_2)
\] 
is the local model of a focus-focus point in $\R^4$, as in Equation~\eqref{eq:FFnormal}.
Furthermore, since the Hamiltonian flow of $J$ is $2\pi$-periodic we may assume that $E^i_\mu$ acts on the 
first component by translation, i.e.~that
\begin{equation} \label{eq:formE}
  \proj_1\circ E^i_\mu (x,y) = x - j_a.
\end{equation}

\begin{figure}
\centering
\[ \begin{tikzcd}
       M \supseteq &[-44pt] V_\mu^i \arrow{r}{\phi_\mu^i} \arrow[d,"F"]  &  \R^4 \arrow[d,"q"] &[-30pt] & \\
       B \supseteq \,\,\,\,\null & U_i\setminus\ell_{j_a}\arrow{r}{E^i_\mu} \arrow{d}{\mathcal{A}^2} & \R^2\cong\C  & \supseteq \, \C\setminus\mathrm{i}\R^+ \arrow[r,"K_+"] & \R\\
       & \R &   &  &  
\end{tikzcd}
\] 
\caption{A commutative diagram of the relevant maps for defining the Taylor series invariants, where
$V_\mu^i$ is a neighborhood of $p_\mu^i$. 
The action Taylor series 
$\tilde{\mathsf{s}}^i_\mu$ essentially encodes the difference between $\mathcal{A}^2$ and $K_+\circ E^i_\mu$, while
the transition Taylor series $\mathsf{g}^i_{\mu, \nu}$ encodes the difference between the second components of $E_\mu^i$
and $E_\nu^i$ (Eliasson diffeomorphisms for different focus-focus points in the same fiber).}
\label{fig:diagram}
\end{figure}

Let $\mathcal{A} = (\mathcal{A}^1,\mathcal{A}^2)\colon B\to\R^2$ be a choice of piecewise affine coordinates
as in Definition~\ref{def:affinecoords}.
In order to find invariants of $(M,\om,F)$ that are well-defined up to isomorphisms, we compare the coordinates $\mathcal{A}$ and $E^i_\mu$ in $U_i \setminus \ell_{j_a}$, where $U_i\subset B$ is a neighborhood of $c_i$ and $c_i \in \ell_{j_a}$.
We may assume that $U_i \setminus \ell_{j_a}$ has two connected components, corresponding to $x > j_a$ and $x < j_a$, and that $c_i$ is the only focus-focus value in $U_i$.

Let $\log_+ \colon \C \setminus \imag \R^+ \to \C$ be the determination of $\log$ with $\log_+ 1 = 0$ and branch cut at $\imag \R^+$.
Then, for $c \in \C \setminus \imag \R$, let $K_+ \colon \C \setminus \imag \R^+ \to \R$ be given by
\begin{equation}\label{eqn:K}
  K_+(c)= -\Im (c \log_+ c - c).
\end{equation}

Identifying $\R^2\cong\C$ so $E^i_\mu \colon B \to \C$, let $\tilde{S}^i \colon U_i \setminus \ell_{j_a} \to \R$ be given by
\begin{equation} \label{eq:def-S}
  \tilde{S}^i = 2\pi \mathcal{A}^2 - \sum_{\mu \in \Z_{m_i}} (E^i_\mu)^* K_+ + 2\pi w_a^\alpha (j_a-x)\Heaviside_{(j_a-x)},
\end{equation}
where $\Heaviside_x$ is defined as in Proposition~\ref{prop:generators}.
In the set $x>j_a$ the function $\tilde{S}^i$ takes the difference of the piecewise affine coordinates and the sum of the pull-backs of the function $K_+$. When
$x\leq j_a$ the third term accounts for how the piecewise affine coordinates change passing through $\ell_{j_a}$.

The following lemma explains how the wall-crossing indices and the functions $\tilde{S}^i$ (as in Equation~\ref{eq:def-S})
change when changing the choice of affine coordinates.

\begin{lemma} \label{lem:Gaction}
  Suppose that $\mathcal{A},\mathcal{A}'$ are two choices of piecewise affine coordinates
  as in Definition~\ref{def:affinecoords}, so there exists some $\rho \in G_\mathbf{j}$ such that $\mathcal{A}' = \rho \circ \mathcal{A}$,
  where $G_\mathbf{j}$ is as in Definition~\ref{def:VPIA-group}.
  Let $w_a^\alpha$ denote the wall-crossing indices relative to $\mathcal{A}$ as in Definition~\ref{def:wallcrossing}, let $\tilde{S}^i$ be the function given in Equation~\eqref{eq:def-S}, and
  let $(w_a^\alpha)'$ and $(\tilde{S}^i)'$ be those relative to $\mathcal{A}'$.
  Then the values of $\mathcal{A}'$, $(w_a^\alpha)'$, and $(\tilde{S}^i)'$ in terms of $\mathcal{A}$, $w_a^\alpha$, and $\tilde{S}^i$ are as follows when $\rho$ is a generator of $G_\mathbf{j}$:
  \begin{center} \begin{tabular} {|c|c|c|c|}
    \hline
    $\rho \in G_\mathbf{j}$ & $\mathcal{A}' = \rho \circ \mathcal{A}$                             & $(w_a^\alpha)'$ & $(\tilde{S}^i)'$ \rule{0pt}{2.4ex} \\ \hline \hline
    $T$ & $(\mathcal{A}^1, \mathcal{A}^2 + x)$ & $w_a^\alpha$ & $\tilde{S}^i + 2\pi x$ \rule{0pt}{2.4ex} \\ [.05em] \hline
    $\mathsf{t}_j$, $j\neq j_a$ & $(\mathcal{A}^1, \mathcal{A}^2 + (x-j)\Heaviside_{(x-j)})$     & $w_a^\alpha$ & $\tilde{S}^i + 2\pi (x-j)\Heaviside_{(x-j)}$ \rule{0pt}{2.4ex} \\ [.05em] \hline
    $\mathsf{t}_{j_a}$ & $(\mathcal{A}^1, \mathcal{A}^2 + (x-j_a)\Heaviside_{(x-j_a)})$ & $w_a^\alpha-1$  & $ \tilde{S}^i + 2\pi (x-j_a)$ \rule{0pt}{2.4ex} \\ [.05em] \hline
    $\shift_b$ & $(\mathcal{A}^1, \mathcal{A}^2 + b)$ & $w_a^\alpha$ & $\tilde{S}^i + 2\pi b$ \rule{0pt}{2.4ex} \\ [.05em] \hline
  \end{tabular} \end{center}
\end{lemma}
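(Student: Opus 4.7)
The plan is to verify each of the four rows of the table directly from the definitions: in each case I would substitute $\mathcal{A}' = \rho\circ\mathcal{A}$ into the defining relations for the wall--crossing index (Equation~\eqref{eq:limitT}) and for $\tilde S^i$ (Equation~\eqref{eq:def-S}), and simplify. Rows~1 (the shear $T$), 2 ($\mathsf{t}_j$ with $j\neq j_a$), and 4 ($\shift_b$) are essentially book-keeping: in each of these cases $\rho$ is either a global linear map or a map which is smooth in a neighborhood of $\ell_{j_a}$, so its derivative is the same on both sides of $\ell_{j_a}$ and the wall--crossing index is unchanged. The new value of $\tilde S^i$ then follows from plugging the explicit expression $(\mathcal{A}')^2 = \mathcal{A}^2 + \delta$ (with $\delta = x$, $(x-j)\Heaviside_{(x-j)}$, or $b$) into \eqref{eq:def-S}: the $K_+$ sum and the last term are unchanged, yielding the extra summand $2\pi\delta$.

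The heart of the proof is row~3, the case $\rho = \mathsf{t}_{j_a}$, in which the wall $\ell_{j_a}$ sits at the jump of $\mathsf{t}_{j_a}$ itself. On $\Seq{x > j_a}$ one has $\mathsf{t}_{j_a}(x,y)=(x,y+(x-j_a))$, so $\der \mathsf{t}_{j_a}=T$, while on $\Seq{x < j_a}$ one has $\mathsf{t}_{j_a}=\mathrm{id}$ and $\der\mathsf{t}_{j_a}=I$. Applying \eqref{eq:limitT} to $\mathcal{A}'$ gives
\[
  T^{w_a^\alpha}\lim_{x\to j_a^+}\der\mathcal{A}(x,y)
  = \lim_{x\to j_a^-}\der\mathcal{A}(x,y)
  = \lim_{x\to j_a^-}\der\mathcal{A}'(x,y)
  = T^{(w_a^\alpha)'}\lim_{x\to j_a^+}\der\mathcal{A}'(x,y)
  = T^{(w_a^\alpha)'+1}\lim_{x\to j_a^+}\der\mathcal{A}(x,y),
\]
whence $(w_a^\alpha)' = w_a^\alpha - 1$.

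Substituting into \eqref{eq:def-S} and using $-(j_a-x) = x-j_a$ together with the identity $\Heaviside_{(x-j_a)}+\Heaviside_{(j_a-x)}=1$ (valid off the line $\ell_{j_a}$, which is where $\tilde S^i$ lives), we obtain
\[
 (\tilde S^i)' = \tilde S^i + 2\pi(x-j_a)\Heaviside_{(x-j_a)} + 2\pi(x-j_a)\Heaviside_{(j_a-x)} = \tilde S^i + 2\pi(x-j_a),
\]
which is exactly the entry in the third row. It is worth emphasizing why the $-1$ in the wall--crossing index and the $(x-j_a)\Heaviside_{(x-j_a)}$ shift conspire to produce the globally linear function $2\pi(x-j_a)$: this reflects the design of \eqref{eq:def-S}, where the last term is precisely the correction ensuring that $\tilde S^i$ transforms as a smooth function across $\ell_{j_a}$ under the $G_\mathbf{j}$--action.

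The only real subtlety to watch out for is the $E^i_\mu$--sum in \eqref{eq:def-S}: since $\rho\in G_\mathbf{j}$ does not act on the source $B$ of $F$, the Eliasson charts $E^i_\mu$ are unchanged by $\rho$, and so the $(E^i_\mu)^*K_+$ terms drop out of the comparison in every row. This is the main bookkeeping point, and once it is noted the four rows reduce to the direct computations described above.
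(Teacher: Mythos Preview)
Your proposal is correct and follows essentially the same approach as the paper's own proof, which simply says that the first column follows from the definitions of $T$, $\mathsf{t}_j$, $\shift_b$ and $\mathcal{A}^1 = x$, the second column from Equation~\eqref{eq:limitT}, and the last column from the first two together with Equation~\eqref{eq:def-S} and the observation that $K_+$ and $E^i_\mu$ do not depend on $\mathcal{A}$. Your write-up is more explicit --- in particular your detailed computation for row~3 (the case $\rho=\mathsf{t}_{j_a}$) spells out exactly the cancellation that the paper leaves to the reader --- but the underlying argument is the same.
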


\begin{proof}
  The first column follows from the definitions of $T$, $\mathsf{t}_j$, and $\shift_b$ in Equations~\eqref{eqn:T} and~\eqref{eqn:t_and_shift} and the fact that $\mathcal{A}^1 = x$.
  The second column follows from Equation~\eqref{eq:limitT}, and the last column follows from the first two and Equation~\eqref{eq:def-S}, since $K_+$ and $E^i_\mu$ do not depend on the choice of $\mathcal{A}$.
\end{proof}

\begin{lemma} \label{lem:skA-smooth}
  $\tilde{S}^i$ can be extended to a smooth function in a neighborhood of $c_i$ in $B$.
\end{lemma}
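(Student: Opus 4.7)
The plan is to decompose the smoothness claim into three pieces: smoothness of $\tilde{S}^i$ on each connected component of $U_i\setminus\ell_{j_a}$, smooth extension across $\ell_{j_a}\cap U_i\setminus\{c_i\}$, and smooth extension through the focus-focus value $c_i$ itself.

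The first piece is immediate from the definitions: $\mathcal{A}^2$ is smooth on each of $U_i\cap B_a$ and $U_i\cap B_{a-1}$ by Proposition~\ref{prop:affinecoords}(2); each $(E^i_\mu)^*K_+$ is smooth off the pre-image of $\imag\R^+\cup\{0\}$, which by~\eqref{eq:formE} is contained in $\ell_{j_a}$; and the Heaviside term in~\eqref{eq:def-S} is smooth off $\ell_{j_a}$.

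For the second piece I would carry out a first-order jump computation across $\ell_{j_a}$. A direct calculation gives $dK_+=-\arg_+ c\,dx-\log|c|\,dy$, so the size-$2\pi$ discontinuity of $\arg_+$ across its branch cut shows that $d\bigl(\sum_\mu(E^i_\mu)^*K_+\bigr)$ jumps by $-2\pi m_i\,dx$ across the part of $\ell_{j_a}$ above $c_i$ (where the pulled-back cuts lie) and does not jump below. Using $\mathcal{A}^1=x$ together with~\eqref{eq:limitT}, $d(2\pi\mathcal{A}^2)$ jumps by $-2\pi\,dx$ times the wall-crossing index of the crossed segment, and the Heaviside term in~\eqref{eq:def-S} contributes a compensating $+2\pi w_a^\alpha\,dx$. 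Writing $i=r_a+\beta$ and taking $\alpha=\beta$ (the wall-crossing index of the segment just below $c_i$), the relation $w_a^{\beta+1}-w_a^\beta=m_{r_a+\beta}=m_i$ makes all three first-order jumps cancel simultaneously above and below $c_i$. All higher-order derivatives extend continuously across $\ell_{j_a}\setminus\{c_i\}$ without further cancellation: the $T^{w_a^\alpha}$ monodromy in~\eqref{eq:limitT} contributes only a linear function of $x$ to $\mathcal{A}^2$, every partial of $K_+$ of order $\geq 2$ is continuous at the cut, and the Heaviside term is piecewise linear in $x$.

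The hard part is the third piece, smoothness at $c_i$ itself, because each $(E^i_\mu)^*K_+$ carries a $y\log|c|$ factor whose partials blow up at the origin, and correspondingly $\mathcal{A}^2$ is itself singular at focus-focus values. The claim is that these singularities cancel precisely, and I would invoke the action-integral computation for a non-simple focus-focus fiber established by Pelayo--Tang~\cite{PT} (generalizing V\~u Ng\d{o}c's original analysis~\cite{VN2003}): it expresses $2\pi\mathcal{A}^2$, in a punctured neighbourhood of $c_i$, as the sum of a function smooth in a full neighbourhood of $c_i$, the quantity $\sum_{\mu\in\Z_{m_i}}(E^i_\mu)^*K_+$, and exactly the piecewise-linear wall-crossing correction appearing in~\eqref{eq:def-S}. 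Subtracting these singular contributions in the definition of $\tilde{S}^i$ leaves the smooth summand, which is the desired extension.
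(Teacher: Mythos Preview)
Your decomposition into three pieces is reasonable and the jump computation in pieces~1--2 is correct, but the paper's proof is much shorter and your piece~3 contains a small over-attribution to~\cite{PT}. The paper argues in two lines: by Lemma~\ref{lem:Gaction}, acting by any generator of $G_\mathbf{j}$ changes $\tilde{S}^i$ only by a function smooth near $c_i$ (an affine function of $x$, or a piecewise-linear function whose corner lies on a \emph{different} line $\ell_j$), so one may assume $w_a^\alpha=0$; in that case the Heaviside term vanishes and the statement is exactly \cite[Lemma~6.3]{PT}. No separate wall-crossing analysis is carried out.

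The imprecision in your piece~3 is that you claim \cite{PT} expresses $2\pi\mathcal{A}^2$ as $(\text{smooth})+\sum_\mu(E^i_\mu)^*K_+ +(\text{wall-crossing correction})$ for your general $\mathcal{A}$. In fact \cite{PT} is a semi-local analysis corresponding to the choice $w_a^\alpha=0$ and yields only the first two summands; the wall-crossing term is a feature of the global coordinate $\mathcal{A}$, not of the semi-local model. Passing from the $w_a^\alpha=0$ case to arbitrary $\mathcal{A}$ still needs an argument. You can close this gap either by citing Lemma~\ref{lem:Gaction} directly (the paper's route, after which your pieces~1--2 become redundant), or by noting that $\tilde{S}^i$ and its $w_a^\alpha=0$ counterpart differ by a continuous function that is affine on each side of $\ell_{j_a}$ and, by your pieces~1--2 combined with \cite{PT}, smooth on $U_i\setminus\{c_i\}$, hence globally affine. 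Either fix is immediate, but as written your piece~3 attributes to \cite{PT} more than it proves.
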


\begin{proof}
  The lemma is true when $w_a^\alpha = 0$ by~\cite[Lemma 3.5]{PT}.
  Since the action of $G_\mathbf{j}$ described in Lemma~\ref{lem:Gaction} does not affect the smoothness of $\tilde{S}^i$ around $c_i$, if there is a choice of $\mathcal{A}$ such that $w_a^\alpha = 0$, then the proof is complete.
  Such a choice always exists by acting by an integer power of $\mathsf{t}_{j_a}$, which changes
  $w_a^\alpha$ by one.
\end{proof}

\begin{definition} \label{def:Taylor_series}
We still use $\tilde{S}^i$ to denote the smooth extension to a neighborhood of $c_i$.
Let $X = \der x, Y = \der y$.   Performing a Taylor expansion of $\tilde{S}^i$ around the origin under coordinates $E^i_\mu$, we get a power series
  \begin{equation} \label{eq:deftildes}
    \tilde{\mathsf{s}}^i_\mu = \Tl_0[\tilde{S}^i\circ (E^i_\mu)^{-1}] = \sum_{p, q = 0}^ \infty (\tilde{\mathsf{s}}^i_\mu)^{(p, q)} X^p Y^q,
  \end{equation}
  the \emph{action Taylor series at $p^i_\mu$}, where $\mu \in \Z_{m_i}$.
  Expanding the transition maps between coordinates $E^i_\mu$ and $E^i_\nu$, we get
  \begin{equation} \label{eq:transitionTaylor}
    \mathsf{g}^i_{\mu, \nu} = \Tl_0[\proj_2 \circ E^i_\nu \circ (E^i_\mu)^{-1}] = \sum_{p, q = 0}^ \infty (\mathsf{g}^i_{\mu, \nu})^{(p, q)} X^p Y^q,
  \end{equation}
  the \emph{transition Taylor series} from $p^i_\mu$ to $p^i_\nu$, where $\mu, \nu \in \Z_{m_i}$.
\end{definition}

From~\cite[Equation (3.3)]{PT}, we have the following.

\begin{lemma}\label{lem:invariant-constraint}
The series in Equations~\eqref{eq:deftildes} and~\eqref{eq:transitionTaylor} are constrained by the following relations:
\begin{equation} \label{eq:invariant-constraint}
\begin{cases}
  (\mathsf{g}^i_{\mu, \nu})^{(0, 1)} > 0, \\
  \tilde{\mathsf{s}}_\mu^i(X, Y) = \tilde{\mathsf{s}}_\nu^i(X, \mathsf{g}_{\mu, \nu}^i(X, Y)), \\
  \mathsf{g}_{\mu, \mu}^i(X, Y) = Y, \\
  \mathsf{g}_{\mu, \sigma}^i(X, Y) = \mathsf{g}_{\nu, \sigma}^i(X, \mathsf{g}_{\mu, \nu}^i(X, Y)),
\end{cases}
\end{equation}
for $\mu, \nu, \sigma \in \Z_{m_i}$.
\end{lemma}

Let $\R[[X,Y]]$ denote the set of Taylor series in two variables with real coefficients and $\R_+[[X,Y]]$ be the subset
of those which have zero constant terms and positive coefficients for the linear terms in $Y$.
In~\cite[Theorem 3.9]{PT} it was shown that the semi-local model in a neighborhood of $F^{-1}(c_i)$ is determined up to semi-local isomorphisms by a set $(\mathsf{s}^i_\mu, \mathsf{g}^i_{\mu, \nu})_{\mu, \nu \in \Z_{m_i}}$, where $\mathsf{s}^i_\mu \in \R[[X,Y]]/(2\pi X\Z)$ and $\mathsf{g}^i_{\mu, \nu} \in \R_+[[X,Y]]$, and conversely that the semi-local model determines $(\mathsf{s}^i_\mu, \mathsf{g}^i_{\mu, \nu})_{\mu, \nu \in \Z_{m_i}}$ up to cyclic reordering of the indices.
By \emph{semi-local} we mean in a neighborhood of the fiber (some authors also use the term \emph{semi-global} for this).
Because the construction used above is analogous to the one in~\cite[Definitions 3.6 and 3.7]{PT}, $(\mathsf{s}^i_\mu,\mathsf{g}^i_{\mu, \nu})_{\mu, \nu \in \Z_{m_i}}$ can be obtained from the invariant in Definition~\ref{def:Taylor_series} via
\begin{equation} \label{eq:tilde-to-notilde}
  (\mathsf{s}^i_\mu, \mathsf{g}^i_{\mu, \nu}) = \left(\tilde{\mathsf{s}}^i_\mu - (\tilde{\mathsf{s}}^i_\mu)^{(0, 0)} + 2\pi X \Z, \mathsf{g}^i_{\mu, \nu}\right)
\end{equation}
for each $\mu, \nu \in \Z_{m_i}$.

\begin{remark}\label{rmk:generatingset}
Because of the relations in~\eqref{eq:invariant-constraint}, the elements of the tuple $(\tilde{\mathsf{s}}_{\mu}^i,\mathsf{g}^i_{\mu,\nu})_{\mu,\nu\in\Z_{m_i}}$ are not independent. In particular, $\tilde{\mathsf{s}}_{0}^i$ and $(\mathsf{g}_{\mu,\mu+1}^i)_{\mu\in \Z_{m_i}\setminus\{m_i-1\}}$ completely
  determine the entire tuple $(\tilde{\mathsf{s}}_{\mu}^i,\mathsf{g}^i_{\mu,\nu})_{\mu,\nu\in\Z_{m_i}}$. 
  That is, given any choice of 
  $\tilde{\mathsf{s}}_{0}^i\in\R [[X,Y]]$ and $\mathsf{g}_{0,1}^i, \ldots, \mathsf{g}_{m_i-2,m_i-1}^i\in \R_+ [[X,Y]]$,
  there is exactly one possible choice of tuple $(\tilde{\mathsf{s}}_{\mu}^i,\mathsf{g}^i_{\mu,\nu})_{\mu,\nu\in\Z_{m_i}}$ which satisfies
  the system of equations~\eqref{eq:invariant-constraint}.
\end{remark}

Since the transition Taylor series only depends on the diffeomorphisms from the
Eliasson linearization theorem, and not on $\tilde{S}^i$ or $\mathcal{A}$, changing the choice of piecewise affine coordinates $\mathcal{A}$ preserves $\mathsf{g}^i_{\mu, \nu}$.
Define an action of $G_\mathbf{j}$ on $\tilde{\mathsf{s}}^i_\mu$
by specifying the action of the generators as
\begin{equation} \label{eq:new_tilde_s}
  \rho(\tilde{\mathsf{s}}^i_\mu) = \begin{cases}
    \tilde{\mathsf{s}}^i_\mu + 2\pi X + 2\pi j_a, &\textrm{if } \rho = T, \\
    \tilde{\mathsf{s}}^i_\mu + 2\pi X + 2\pi (j_{a'} - j_a), &\textrm{if } \rho = \mathsf{t}_{j_{a'}} \textrm{ and }a'\leq a, \\
    \tilde{\mathsf{s}}^i_\mu, &\textrm{if } \rho = \mathsf{t}_{j_{a'}} \textrm{ and }a'>a, \\
    \tilde{\mathsf{s}}^i_\mu + 2\pi b, &\textrm{if } \rho = \shift_b, 
  \end{cases}
\end{equation}
where $c_i \in \ell_{j_a}$.

\begin{lemma} \label{lem:Gaction-Taylor}
  Let $\tilde{\mathsf{s}}^i_\mu$ be the action Taylor series at the focus-focus point $p_\mu^i$ as in Definition~\ref{def:Taylor_series} relative to a choice of piecewise affine coordinates $\mathcal{A}$, and let $(\tilde{\mathsf{s}}^i_\mu)'$ be the action Taylor series of $p_\mu^i$ relative to a choice of piecewise affine coordinates $\mathcal{A}' = \rho\circ \mathcal{A}$ where $\rho \in G_\mathbf{j}$.
  Then $(\tilde{\mathsf{s}}^i_\mu)' = \rho(\tilde{\mathsf{s}}^i_\mu)$.
\end{lemma}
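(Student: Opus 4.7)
The plan is to verify the formula one generator of $G_\mathbf{j}$ at a time by combining the transformation law for $\tilde{S}^i$ from Lemma~\ref{lem:Gaction} with the pull-back and Taylor expansion steps used in Definition~\ref{def:Taylor_series}. Once the formula is checked on the generators $T$, the $\mathsf{t}_{j_{a'}}$, and the $\shift_b$, it will extend to all of $G_\mathbf{j}$ because the prescribed rule in Equation~\eqref{eq:new_tilde_s} only adds an affine function of $X$ and a constant to $\tilde{\mathsf{s}}^i_\mu$, and such translations compose additively.

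The single computational input needed is Equation~\eqref{eq:formE}: this says that $(E^i_\mu)^{-1}$ sends a point with $E^i_\mu$-coordinates $(X,Y)$ near the origin to a point of $B$ whose $x$-coordinate equals $X + j_a$. Consequently $((E^i_\mu)^{-1})^* x = X + j_a$, and each extra summand appearing in $(\tilde{S}^i)' - \tilde{S}^i$ from Lemma~\ref{lem:Gaction}, after localization near $c_i$, pulls back to an affine function of $X$, so its Taylor expansion at the origin equals itself.

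The case-by-case check then reduces to transcription from the table in Lemma~\ref{lem:Gaction}. For $\rho = T$ the change $2\pi x$ pulls back to $2\pi X + 2\pi j_a$; for $\rho = \shift_b$ the change is the constant $2\pi b$; for $\rho = \mathsf{t}_{j_a}$ the change $2\pi(x-j_a)$ pulls back to $2\pi X$; and for $\rho = \mathsf{t}_{j_{a'}}$ with $a' \neq a$, the change $2\pi(x-j_{a'})\Heaviside_{x-j_{a'}}$, restricted to a neighbourhood of $c_i\in\ell_{j_a}$, becomes either $2\pi(x-j_{a'})$ (if $a'<a$, so $\Heaviside_{x-j_{a'}}\equiv 1$ near $c_i$) or vanishes identically (if $a'>a$). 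In each case the result matches the corresponding line of Equation~\eqref{eq:new_tilde_s} up to reading off constant terms.

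The part I expect to be the main obstacle is not computational but conceptual: one must check that the extra terms are actually smooth at $c_i$, so that the Taylor expansion around the origin in $E^i_\mu$-coordinates is even defined. For $\mathsf{t}_{j_{a'}}$ with $a' \neq a$ this is automatic because $c_i \notin \ell_{j_{a'}}$ forces $\Heaviside_{x-j_{a'}}$ to be locally constant near $c_i$; for $\mathsf{t}_{j_a}$ one has to observe that the discontinuous Heaviside contribution coming from the shift in $\mathcal{A}^2$ cancels exactly against the discontinuous contribution coming from the $-1$ change in $w_a^\alpha$ in the last term of Equation~\eqref{eq:def-S}, leaving the globally smooth function $2\pi(x - j_a)$ as the net change, which is precisely the content of the third row of Lemma~\ref{lem:Gaction}. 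Once this smoothness is in hand, Lemma~\ref{lem:skA-smooth} permits taking the Taylor expansion term by term and finishes the proof.
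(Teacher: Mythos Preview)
Your proposal is correct and follows essentially the same approach as the paper: the paper's proof simply says to combine the transformation law for $\tilde{S}^i$ from Lemma~\ref{lem:Gaction} with Equations~\eqref{eq:formE} and~\eqref{eq:deftildes} and verify the outcome against Equation~\eqref{eq:new_tilde_s}, which is precisely what you do, only with the case-by-case computation and the smoothness check spelled out explicitly.
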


\begin{proof}
Lemma~\ref{lem:Gaction} explains how changing piecewise affine
coordinates changes the function $\tilde{S}^i$ (as in Equation~\eqref{eq:def-S}).
Combining this with Equations~\eqref{eq:formE} and~\eqref{eq:deftildes},
which show how to obtain $\tilde{s}_\mu^i$ from $\tilde{S}^i$, describes how changing affine
coordinates affects $\tilde{s}_\mu^i$, which is exactly the same as the 
definition of $\rho(\tilde{s}_\mu^i)$ from Equation~\eqref{eq:new_tilde_s}.
\end{proof}

The tuple of Taylor series invariants we have constructed depends on the choice of ordering for the focus-focus points
in the given focus-focus fiber, which is only unique up to cyclic permutation. 
Let $[\tilde{\mathsf{s}}_{\mu}^i,\mathsf{g}^i_{\mu,\nu}]_{\mu,\nu\in\Z_{m_i}}$ denote the orbit of $(\tilde{\mathsf{s}}_{\mu}^i,\mathsf{g}^i_{\mu,\nu})_{\mu,\nu\in\Z_{m_i}}$ under the action of $\Z_{m_i}$ by $z\cdot(\tilde{\mathsf{s}}_{\mu}^i,\mathsf{g}^i_{\mu,\nu})_{\mu,\nu\in\Z_{m_i}} = (\tilde{\mathsf{s}}_{\mu+z}^i,\mathsf{g}^i_{\mu+z,\nu+z})_{\mu,\nu\in\Z_{m_i}}$ for $z\in\Z_{m_i}$, where the addition in the indices is modulo $m_i$.
Note that one element of the orbit $[\tilde{\mathsf{s}}_{\mu}^i,\mathsf{g}^i_{\mu,\nu}]_{\mu,\nu\in\Z_{m_i}}$
satisfies the system of equations~\eqref{eq:invariant-constraint} if and only if all elements of the orbit satisfy those equations.

\paragraph{Conclusion:}
We have assigned to each critical value $c_i$ a tuple of Taylor series $[\tilde{\mathsf{s}}^i_\mu, \mathsf{g}^i_{\mu, \nu}]_{\mu, \nu \in \Z_{m_i}}$
with $\tilde{\mathsf{s}}^i_\mu\in\R[[X,Y]]$ and $\mathsf{g}_{\mu,\nu}^i\in\R_+[[X,Y]]$ for each $i\in\{1,\ldots,v_\mathrm{f}\}$ and $\mu,\nu\in\Z_{m_i}$.
Moreover, the $\mathsf{g}_{\mu,\nu}$ are independent of the choice of piecewise affine coordinates and when changing the choice of piecewise affine
coordinates the $\tilde{\mathsf{s}}_\mu^i$ change according to Equation~\eqref{eq:new_tilde_s} and Lemma~\ref{lem:Gaction-Taylor}.

\begin{remark} \label{rmk:S-from-tilde}
  In light of Equations~\eqref{eq:tilde-to-notilde}--\eqref{eq:new_tilde_s} and Lemma~\ref{lem:Gaction-Taylor}, note that the choice of $\mathcal{A}$ does not affect the part of $\tilde{\mathsf{s}}^i_\mu$ which represents the series from~\cite{VN2003,PT}, as expected.
\end{remark}

\begin{remark}
  The twisting index invariant (the fifth invariant in the Pelayo--V\~{u} Ng\d{o}c classification~\cite{PVN2009,PVN2011}) does not appear as an independent piece of the complete semitoric invariant, since the data of the twisting index invariant is now encoded in the $X$ coefficient of the action Taylor series $\tilde{\mathsf{s}}^i_\mu$. For a discussion of the relationship between the twisting index invariant and the complete semitoric invariant see Section~\ref{sec:twist}.
\end{remark}

\section{The complete semitoric invariant} \label{sec:definition}

In the previous section we constructed a $5$-tuple
\begin{equation} \label{eq:object}
  \tilde{\mathrm{i}}(M, \om, F) = \left(\De, (\ell_{j_a})_{a = 1}^\lamf, (w_a)_{a = 1}^\lamf, (\tilde{c}_i)_{i = 1}^\vf, \left(m_i, [\tilde{\mathsf{s}}^i_\mu, \mathsf{g}^i_{\mu, \nu}]_{\mu, \nu \in \Z_{m_i}}\right)_{i = 1}^\vf\right)
\end{equation}
starting from the system $(M, \om, F)$, which depends on the choice of piecewise affine coordinates $\mathcal{A}$ as in Definition~\ref{def:affinecoords}, so
it is not yet a symplectic invariant of $(M,\om,F)$.

In order to define the complete semitoric invariant, of simple or non-simple systems, we start with the following definition, motivated from Section~\ref{sec:invariants}.

\begin{definition}
  Let
  \begin{equation*}
    \mathfrak{T} = \Set{(m, [\tilde{\mathsf{s}}_\mu, \mathsf{g}_{\mu, \nu}]_{\mu, \nu \in \Z_m})\mmid m \in \Z_{>0}\textrm{ and }\tilde{\mathsf{s}}_\mu \in \R[[X,Y]], \mathsf{g}_{\mu, \nu} \in \R_+[[X,Y]] \textrm{ for all }\mu, \nu \in \Z_m}
  \end{equation*}
  and for $\lamf,\vf \in \Z_{\geq 0}$ let
  \begin{equation} \label{eqn:defX}
    \mathbf{X}_{\lamf, \vf} = \mathrm{Polyg}(\R^2) \times (\mathrm{Vert}(\R^2))^\lamf \times \Z^{\lamf} \times (\R^2)^\vf \times (\mathfrak{T})^\vf.
  \end{equation}
  Let $\mathbf{z} = (z_0, \dotsc,z_\lamf) \in \Z^{\lamf+1}$ and $b \in \R$.
  Let $T$, $\mathsf{t}_{j}$, and $\shift_b$ be
  as given in Equations~\eqref{eqn:T} and~\eqref{eqn:t_and_shift} and let 
  these operators act on $\tilde{\mathsf{s}}^i_\mu$ as in Equation~\eqref{eq:new_tilde_s}.
  By $\mathsf{t}_{j_a}^{z_a}$ we simply mean the composition of 
 $\mathsf{t}_{j_a}$ with itself $z_a$ times. Define the action $(\Z^{\lamf+1}\times\R)\times \mathbf{X}_{\lamf, \vf} \to \mathbf{X}_{\lamf, \vf}$ of $\Z^{\lamf+1} \times \R$ on $\mathbf{X}_{\lamf, \vf}$ by
  \begin{equation} \label{eq:Gj-action}
    (\mathbf{z},b)\cdot \left( \begin{array}{c} \De,\\[.3em]
     (\ell_{j_a})_{a = 1}^\lamf,\\[.3em]
     (w_a)_{a = 1}^\lamf,\\[.3em]
     (\tilde{c}_i)_{i = 1}^\vf,\\[.3em]
     \left( m_i, [\tilde{\mathsf{s}}^i_\mu, \mathsf{g}^i_{\mu, \nu}]_{\mu, \nu \in \Z_{m_i}}\right)_{i = 1}^\vf \end{array}\right)
    = 
    \left( \begin{array}{c}
      \shift_b\circ \mathsf{t}_{j_1}^{z_1}\circ\ldots\circ \mathsf{t}_{j_\lamf}^{z_\lamf}\circ T^{z_0}(\De),\\[.3em] 
      (\ell_{j_a})_{a = 1}^\lamf,\\[.3em]
      (w_a - z_a)_{a = 1}^\lamf,\\[.3em]
      \left( \shift_b\circ \mathsf{t}_{j_1}^{z_1}\circ\ldots\circ \mathsf{t}_{j_\lamf}^{z_\lamf} \circ T^{z_0}(\tilde{c}_i) \right)_{i = 1}^\vf,\\[.3em]
      \left( m_i, \left[\shift_b\circ\mathsf{t}_{j_1}^{z_1}\circ\ldots\circ \mathsf{t}_{j_\lamf}^{z_\lamf}\circ T^{z_0}(\tilde{\mathsf{s}}^i_\mu),\mathsf{g}^i_{\mu, \nu}\right]_{\mu, \nu \in \Z_{m_i}}\right)_{i = 1}^\vf
    \end{array}
    \right).
  \end{equation}    
\end{definition}

It is straightforward to check that Equation~\eqref{eq:Gj-action} actually
defines a group action.
The construction of $\tilde{\mathrm{i}}(M, \om, F)$ in Equation~\eqref{eq:object} is unique up to the choice of piecewise affine coordinates $\mathcal{A}$ as in Definition~\ref{def:affinecoords}, which is unique up to the action of $G_\mathbf{j}$
as in Definition~\ref{def:VPIA-group}, which is isomorphic to $\Z^{\lamf+1} \times \R$.
Since we have taken the quotient by precisely this symmetry (see Lemmas~\ref{lem:Gaction} and~\ref{lem:Gaction-Taylor}) we have:

\begin{proposition} \label{prop:welldef}
  The assignment of $(M, \om, F)\mapsto (\Z^{\lamf+1} \times \R)\cdot \tilde{\mathrm{i}}(M, \om, F)$ is a well-defined function
  which has as its domain the set of all semitoric systems $\mathcal{M}$ and as its codomain the quotient space 
  $\coprod_{\lambda_\mathrm{f},v_\mathrm{f}\in\Z_{\geq 0}} \left( \mathbf{X}_{\lamf, \vf} / (\Z^{\lambda_\mathrm{f}+1}\times\R)\right)$.
\end{proposition}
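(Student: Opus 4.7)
The plan is to verify well-definedness by tracking the ambiguities in the construction of $\tilde{\mathrm{i}}(M, \om, F)$ and showing each is exactly absorbed by passing to the quotient. First I would observe that, given any semitoric system $(M, \om, F)$, the numbers $\lamf$ and $\vf$ are intrinsically determined as the cardinalities of $J(M_\mathrm{f})$ and $F(M_\mathrm{f})$, so the resulting object lies in $\mathbf{X}_{\lamf, \vf}$ as defined in Equation~\eqref{eqn:defX}. The remaining ambiguity reduces to two sources: (a) an ordering of the focus-focus points in each focus-focus fiber, which is unique up to cyclic permutation, and (b) a choice of piecewise affine coordinates $\mathcal{A}$ as in Definition~\ref{def:affinecoords}, which by the lemma following Definition~\ref{def:VPIA-group} is unique up to left composition by an element of $G_\mathbf{j} \cong \Z^{\lamf+1} \times \R$.

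Ambiguity (a) is absorbed into the orbit notation $[\tilde{\mathsf{s}}^i_\mu, \mathsf{g}^i_{\mu, \nu}]_{\mu, \nu \in \Z_{m_i}}$, so it suffices to handle (b). The heart of the proof is to verify, component by component, that the action of a generator $\rho$ of $G_\mathbf{j}$ on $\mathcal{A}$ induces on $\tilde{\mathrm{i}}(M, \om, F)$ exactly the action prescribed by Equation~\eqref{eq:Gj-action}. More precisely: the polygon $\De = \mathcal{A}(B)$ and the marked points $\tilde{c}_i = \mathcal{A}(c_i)$ transform by direct application of $\rho$; the vertical lines $\ell_{j_a}$ are preserved, since every element of $G_\mathbf{j}$ preserves the first coordinate; the multiplicities $m_i$ are intrinsic to the fibers and are therefore unchanged; the transition Taylor series $\mathsf{g}^i_{\mu, \nu}$ depend only on the Eliasson charts $E^i_\mu$ and do not involve $\mathcal{A}$; the wall-crossing indices $w_a$ transform as in Lemma~\ref{lem:Gaction}, from which one reads off that only $\mathsf{t}_{j_a}$ affects $w_a$, shifting it by $-1$, consistent with $w_a \mapsto w_a - z_a$; and finally the action Taylor series $\tilde{\mathsf{s}}^i_\mu$ transform exactly as dictated by Equation~\eqref{eq:new_tilde_s}, in view of Lemma~\ref{lem:Gaction-Taylor}.

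With these verifications in hand, combined with the already-noted fact that Equation~\eqref{eq:Gj-action} does define a genuine $(\Z^{\lamf+1}\times\R)$-action on $\mathbf{X}_{\lamf,\vf}$, the orbit $(\Z^{\lamf+1} \times \R)\cdot \tilde{\mathrm{i}}(M, \om, F)$ is a well-defined element of $\mathbf{X}_{\lamf, \vf}/(\Z^{\lamf+1} \times \R)$, and the assignment $(M, \om, F) \mapsto (\Z^{\lamf+1} \times \R)\cdot \tilde{\mathrm{i}}(M, \om, F)$ gives a function with the stated domain and codomain. The step I expect to be the main obstacle is purely bookkeeping: confirming that the transformation rules for $w_a$ and especially for $\tilde{\mathsf{s}}^i_\mu$ under a general composition $\shift_b \circ \mathsf{t}_{j_1}^{z_1} \circ \cdots \circ \mathsf{t}_{j_\lamf}^{z_\lamf} \circ T^{z_0}$ agree with the iterated generator-by-generator formulas from Lemmas~\ref{lem:Gaction} and~\ref{lem:Gaction-Taylor}. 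The main subtlety is the case distinction $a' \leq a$ versus $a' > a$ in Equation~\eqref{eq:new_tilde_s}, where sign or off-by-one errors could easily creep in.
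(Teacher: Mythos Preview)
Your proposal is correct and follows essentially the same approach as the paper: the paper's argument (given in the paragraph immediately preceding the proposition) is simply that the only ambiguity in $\tilde{\mathrm{i}}(M,\om,F)$ is the choice of piecewise affine coordinates $\mathcal{A}$, which is unique up to $G_\mathbf{j}\cong\Z^{\lamf+1}\times\R$, and that Lemmas~\ref{lem:Gaction} and~\ref{lem:Gaction-Taylor} show this is precisely the symmetry quotiented out. Your version is slightly more thorough in explicitly noting the separate cyclic-ordering ambiguity (absorbed by the bracket notation) and in walking through the components one by one, but the underlying argument is identical.
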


\begin{figure}[htb]
  \centering
  \inputfigure{complete-invariant}
  \caption{A representative of the complete semitoric invariant of Definition~\ref{def:complete} with $\lamf = 3$ and $\vf = 4$.
  Each marked point $\tilde{c}_i$ is indicated with an $ \times $ and each vertical line $\ell_{j_a}$ is indicated with a dashed line.
  The integral lattice $\Z^2\subset\R^2$ is also shown.
  In this example the vertices of the polygon are all on lattice points, but this is not true in general.
  The marked points are each labeled with their multiplicity $m_i$ and each segment of each vertical line is marked with its wall-crossing index to the left.
  The lower wall-crossing indices for this example are $w_1 = -2$, $w_2 = 1$, and $w_3=-4$.
  Not shown is the Taylor series label $[\tilde{\mathsf{s}}^i_\mu, \mathsf{g}^i_{\mu, \nu}]_{\mu, \nu \in \Z_{m_i}}$ on each marked point for $i \in\{ 1,2,3,4\}$.}
  \label{fig:complete-invariant}
\end{figure}

\begin{definition} \label{def:complete}
  The \emph{complete semitoric invariant of $(M, \om, F)$} is the $(\Z^{\lamf+1} \times \R)$-orbit of $\tilde{\mathrm{i}}(M, \om, F)$ from Equation~\eqref{eq:object}, see Figure~\ref{fig:complete-invariant}.
\end{definition}

\begin{remark}
 Definition~\ref{def:complete} generalizes to non-simple semitoric systems
 the Pelayo--V\~{u} Ng\d{o}c invariants as given
 by~\cite[Definition 6.1]{PVN2009}. We discuss this in Remark~\ref{rmk:recover}.
\end{remark}

The following is an immediate consequence of the definitions.
\begin{lemma} \label{lem:isom}
  Let $(M_1, \om_1, F_1)$ and $(M_2,\om_2,F_2)$ be semitoric systems.
  If $(M_1, \om_1, F_1)$ and $(M_2,\om_2,F_2)$ are isomorphic then they have the same complete semitoric invariant.
\end{lemma}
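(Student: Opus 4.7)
The strategy is to verify that every piece of $\tilde{\mathrm{i}}(M,\om,F)$ in Equation~\eqref{eq:object} transports along an isomorphism $\phi\colon M_1\to M_2$. The condition $\phi^*(J_2,H_2)=(J_1,f(J_1,H_1))$ with $\partial f/\partial y>0$ yields a homeomorphism $\bar\phi\colon B_1\to B_2$, given by $\bar\phi(x,y)=(x,f(x,y))$, such that $F_2\circ\phi=\bar\phi\circ F_1$. Because $\phi$ is a symplectomorphism and $\partial f/\partial y>0$, the map $\bar\phi$ is orientation preserving, preserves the first coordinate, and is an isomorphism of integral affine manifolds with nodes, so $\bar\phi^*\Lambda_2=\Lambda_1$. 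In particular it sends regular values to regular values and focus-focus values to focus-focus values preserving multiplicities, so the data $\mathbf{j}=J_1(M_{\mathrm{f}})=J_2(M_{\mathrm{f}})$, the counts $\lamf$ and $\vf$, and the multiplicities $m_i$ agree for the two systems.

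Next, for any choice $\mathcal{A}_1$ of piecewise affine coordinates on $(B_1,\Lambda_1)$, I would set $\mathcal{A}_2=\mathcal{A}_1\circ\bar\phi^{-1}$. The three properties of Definition~\ref{def:affinecoords}---preservation of the first coordinate, smoothness on each $B_a$, and $\mathcal{A}_2^*\Periodcan=\Lambda_2$ on regular values---reduce immediately to the corresponding properties of $\mathcal{A}_1$ combined with $\bar\phi^*\Lambda_2=\Lambda_1$, so $\mathcal{A}_2$ is a valid choice for system 2. With this coupled pair of choices, the polygon $\De=\mathcal{A}_1(B_1)=\mathcal{A}_2(B_2)$, the vertical lines $\ell_{j_a}$, the marked points $\tilde{c}_i$, and the wall-crossing indices $w_a$ are literally the same tuple, since each is defined from $\mathcal{A}$ together with base data identified by $\bar\phi$.

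For the Taylor series invariants, I would build Eliasson charts for $(M_2,\om_2,F_2)$ from those of $(M_1,\om_1,F_1)$ by setting $\varphi^{i,2}_\mu=\varphi^{i,1}_\mu\circ\phi^{-1}$ and $E^{i,2}_\mu=E^{i,1}_\mu\circ\bar\phi^{-1}$; these are again orientation-preserving symplectomorphisms and diffeomorphisms satisfying $q\circ\varphi^{i,2}_\mu=E^{i,2}_\mu\circ F_2$ and the normalization~\eqref{eq:formE}. Substituting into Equation~\eqref{eq:def-S}, the first two terms transport by $\bar\phi$ and the third depends only on the $x$-coordinate and on the wall-crossing indices, both of which match, so $\tilde{S}^{i,2}=\tilde{S}^{i,1}\circ\bar\phi^{-1}$. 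Feeding this into Equations~\eqref{eq:deftildes}--\eqref{eq:transitionTaylor} gives $\tilde{\mathsf{s}}^{i,2}_\mu=\tilde{\mathsf{s}}^{i,1}_\mu$ and $\mathsf{g}^{i,2}_{\mu,\nu}=\mathsf{g}^{i,1}_{\mu,\nu}$, identifying the final components of $\tilde{\mathrm{i}}$.

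The only genuine subtlety, and the step I expect to require a moment of care rather than calculation, is the ordering of focus-focus points within a shared fiber: the ordering used on $(M_2,\om_2,F_2)$ is governed by the flow of $H_2$, whereas the construction above transports the ordering induced by $H_1$, and these can a priori differ by a cyclic shift. This is precisely the ambiguity already absorbed by the orbit notation $[\,\cdot\,]_{\mu,\nu\in\Z_{m_i}}$, so the two representatives of $\tilde{\mathrm{i}}$ agree up to at most a cyclic relabeling, and hence their $(\Z^{\lamf+1}\times\R)$-orbits coincide, proving the lemma.
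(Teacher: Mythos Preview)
Your proposal is correct and matches the paper's approach: the paper simply declares the lemma ``an immediate consequence of the definitions'' and gives no further argument, so you have supplied precisely the verification that the authors left implicit. One small sharpening: since $\partial f/\partial y>0$, the pushforward by $\phi$ of the flow of $H_1$ has the same orientation along the fiber as the flow of $H_2$ (indeed $\phi^{-1}_*X_{H_2}=\partial_x f\,X_{J_1}+\partial_y f\,X_{H_1}$), so the cyclic orderings actually coincide, not merely agree up to a shift; but as you note, the bracket $[\,\cdot\,]$ already absorbs any such ambiguity.
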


\begin{remark}\label{rmk:quotientmap}
 Lemma~\ref{lem:isom} is equivalent to the fact that the function $(M,\om,F)\mapsto (\Z^{\lamf+1}\times\R)\cdot\tilde{\mathrm{i}}(M,\om,F)$ induces
 a well-defined function $\mathrm{i}$ given by
 \begin{align}
  \label{eqn:i} \mathrm{i}\colon\mathcal{M}  /{\sim}  & \to \coprod_{\lambda_\mathrm{f},v_\mathrm{f}\in\Z_{\geq 0}} \left( \mathbf{X}_{\lamf, \vf} / (\Z^{\lambda_\mathrm{f}+1}\times\R)\right)\\ \nonumber
  [(M,\om,F)] &\mapsto (\Z^{\lamf+1}\times\R)\cdot \tilde{\mathrm{i}}(M,\om,F),
 \end{align}
 where $[(M,\om,F)]$ denotes the isomorphism class of $(M,\om,F)$.
\end{remark}

\section{Classification}
\label{sec:classification}

In this section we explain how to remove
the simplicity assumption in the classification of semitoric systems
in~\cite{PVN2009, PVN2011}, leading us to a classification 
which applies to both the simple and the non-simple cases, formulated in terms of the complete semitoric
invariant of Definition~\ref{def:complete}.

\subsection{Uniqueness} \label{ssec:uniqueness}

We will use the following result from~\cite{PT}, which is a generalization of~\cite[Lemma 5.1]{VN2003}, in an essential way in the upcoming proof of Proposition~\ref{prop:uniqueness}.

\begin{lemma}[{\cite[Lemma 4.1]{PT}}]
\label{lem:PT-localdiff}
Let $E_j$, $j \in \Z_k$, be diffeomorphisms from a neighborhood $U$ of $0$ in $\R^2$ to a neighborhood of $0$ in $\R^2$ sending $0$ to itself.
We denote $\sigma = \sum_{j \in Z_k}E_j^* \der K_+$, where $K_+$ is as given in Equation~\eqref{eqn:K}.
Let $\tau_1, \tau_2 \in \Omega^1(U)$ such that $\tau_2 - \tau_1$ is closed and flat.
Then there is a diffeomorphism $G \colon U \to G(U) \subset \R^2$ such that $G^*(\tau_2 + \sigma) = \tau_1 + \sigma$.
\end{lemma}

\begin{proposition} \label{prop:uniqueness}
  Let $(M_1, \om_1, F_1)$ and $(M_2,\om_2,F_2)$ be semitoric systems.
  Then $(M_1, \om_1, F_1)$ and $(M_2,\om_2,F_2)$ are isomorphic if and only if they have the same complete semitoric invariant as in Definition~\ref{def:complete}.
\end{proposition}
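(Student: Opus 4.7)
The forward direction is given by Lemma~\ref{lem:isom}. For the converse, the plan is to follow the Pelayo--V\~{u} Ng\d{o}c strategy of~\cite{PVN2011}, replacing the semi-local model around a singly-pinched focus-focus fiber with the non-simple semi-local model from~\cite{PT}. Suppose $(M_1,\om_1,F_1)$ and $(M_2,\om_2,F_2)$ share the same complete semitoric invariant. Using Lemmas~\ref{lem:Gaction} and~\ref{lem:Gaction-Taylor}, I would first choose piecewise affine coordinates $\mathcal{A}_k$ on $B_k$ for $k\in\{1,2\}$ so that the representatives $\tilde{\mathrm{i}}(M_1,\om_1,F_1)$ and $\tilde{\mathrm{i}}(M_2,\om_2,F_2)$ from Equation~\eqref{eq:object} agree on the nose, thereby identifying the bases via $\mathcal{A}_2^{-1}\circ\mathcal{A}_1$ together with their focus-focus values, multiplicities, wall-crossing indices, and Taylor-series data.

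Next, for each focus-focus value $c_i$, the equality of $[\tilde{\mathsf{s}}^i_\mu,\mathsf{g}^i_{\mu,\nu}]$ passes via Equation~\eqref{eq:tilde-to-notilde} to equality of the semi-local invariants $[\mathsf{s}^i_\mu,\mathsf{g}^i_{\mu,\nu}]$, so~\cite[Theorem 1.1]{PT} produces a symplectomorphism $\varphi_i$ between saturated neighborhoods of $F_1^{-1}(c_i)$ and $F_2^{-1}(c_i)$ intertwining the momentum maps in the sense required by an isomorphism. Over $B_{\mathrm{r}}$, the matching integral affine structures guaranteed by $\mathcal{A}_1=\mathcal{A}_2$ let me construct a symplectomorphism $\psi$ using Arnold--Liouville action-angle coordinates built from $J$ and a second local action on each simply connected piece $B_a$. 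The polygon invariant $\De$ then allows me to extend $\psi$ across the elliptic-regular and elliptic-elliptic boundary strata using the local models together with Delzant-type uniqueness, exactly as in~\cite{PVN2011}.

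The main obstacle is gluing $\psi$ to the $\varphi_i$ on the overlaps around the focus-focus fibers. On each such overlap, $\varphi_i^{-1}\circ\psi$ is a fiber-preserving symplectomorphism over a punctured neighborhood of $c_i$ in the base and hence the time-$1$ flow of a Hamiltonian of the form $G\circ F$ for some smooth $G$ defined off $c_i$. The condition that $G$ extend smoothly across $c_i$---so that the corrected $\psi$ matches $\varphi_i$---reduces precisely to the equality at $c_i$ of the two action Taylor series $\tilde{\mathsf{s}}^i_\mu$, whose $X$-coefficient encodes the data classically recorded by the twisting-index invariant (see Section~\ref{sec:twist}). Because we arranged this equality in the setup, the obstruction vanishes and the $\varphi_i$ together with the corrected $\psi$ assemble into a global isomorphism $\phi\colon M_1\to M_2$. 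The subtlety specific to the non-simple case is that $(\tilde{\mathsf{s}}^i_\mu,\mathsf{g}^i_{\mu,\nu})_{\mu,\nu\in\Z_{m_i}}$ is only defined up to cyclic permutation, so before invoking~\cite{PT} I would fix a compatible cyclic labeling of the focus-focus points in $F_1^{-1}(c_i)$ and $F_2^{-1}(c_i)$; this bookkeeping, together with checking that the twisting-index contribution now lives coherently inside $\tilde{\mathsf{s}}^i_\mu$ along each segment $\ell_{j_a}^\alpha$, is where I expect the bulk of the technical work.
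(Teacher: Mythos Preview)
Your proposal is broadly correct and follows the same overall strategy as the paper: cite Lemma~\ref{lem:isom} for the forward direction, then follow the Pelayo--V\~{u} Ng\d{o}c argument replacing the single-pinch semi-local model of~\cite{VN2003} by the multi-pinch model of~\cite{PT}. The paper proceeds exactly this way, but it organizes the converse differently, and the difference is worth noting.

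The paper follows~\cite[pages 588--596]{PVN2009} rather than~\cite{PVN2011}, and separates out an explicit \emph{Step 1: first reduction} which you have essentially absorbed into your gluing discussion. In that step one sets $h=\mathcal{A}_1^{-1}\circ\mathcal{A}_2$ and replaces $F_2$ by $h\circ F_2$ so that the two systems share the same base; the nontrivial point is showing $h$ is \emph{smooth} near each focus-focus value. The paper uses equality of the transition Taylor series to arrange $(E^i_\nu)_1=(E^i_\nu)_2$, then equality of the action Taylor series together with~\cite[Lemma 4.5]{PT} to conclude $\mathcal{A}_1^2$ and $\mathcal{A}_2^2$ agree up to a flat function, and finally invokes~\cite[Lemma 6.8]{PT} and~\cite[Lemma 5.1]{VN2003} to obtain smoothness. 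Only after this reduction does one build local symplectomorphisms over a cover (your $\psi$ and $\varphi_i$), with the gluing step (\emph{Step 3}) then carried over unchanged from~\cite{PVN2009}.

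Your version instead leaves the base identification implicit and pushes the Taylor-series input into the gluing obstruction for $\varphi_i^{-1}\circ\psi$. That can be made to work, but the sentence ``reduces precisely to the equality\ldots of the two action Taylor series'' hides exactly the flat-function/smoothness argument the paper isolates in Step~1; without first arranging a common smooth base map, your $\psi$ and $\varphi_i$ intertwine $F_1$ and $F_2$ only up to \emph{a priori} different local reparametrizations, and matching those on overlaps is precisely the content of the smoothness of $h$. So your outline is correct, but the paper's decomposition makes clearer where the new input from~\cite{PT} (specifically Lemmas~4.5 and~6.8) actually enters.
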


\begin{proof}
  The implication from left to right is Lemma~\ref{lem:isom}.
  For the implication from right to left we follow~\cite[pages 588--596]{PVN2009} and only prove statements which are not
  already present in that proof.
  The proof in~\cite{PVN2009} is split into three steps, we start with \emph{Step 1: first reduction.}
  Assume that $(M_1, \om_1, F_1)$ and $(M_2, \om_2, F_2)$ are semitoric systems which have the same complete semitoric invariant:
  \[
    (\Z^{\lamf+1}\times\R) \cdot \left( \De, (\ell_{j_a})_{a = 1}^\lamf, (w_a)_{a = 1}^\lamf, (\tilde{c}_i)_{i = 1}^\vf, \left( m_i, [\tilde{\mathsf{s}}^i_\mu, \mathsf{g}^i_{\mu, \nu}]_{\mu, \nu \in \Z_{m_i}}\right)_{i = 1}^\vf \right)\in \mathbf{X}_{\lamf, \vf} / (\Z^{\lambda_\mathrm{f}+1}\times\R).
  \]
  
 Recall that different representatives of the complete semitoric invariant
 correspond to different choices of piecewise affine coordinates as in
 Definition~\ref{def:affinecoords}.
  First, we choose the same representative of the complete semitoric invariant for each system, so in particular 
  as in the conclusion of Section~\ref{ssec:semitoricpolygon} they have the same polygons $\De$ and the same ordered
  tuple of lower wall-crossing indices (as in Definition~\ref{def:wallcrossing}).
  This means that for $i\in\{1,2\}$ there exist piecewise affine coordinates 
  as in Definition~\ref{def:affinecoords} for each system which have the same image $\De$, we
  denote them by $\mathcal{A}_i = (\mathcal{A}_i^1,\mathcal{A}_i^2) \colon F_i(M_i) \to \De$
  (in~\cite{PVN2009} $\mathcal{A}_i$ is denoted instead by $g_i^{-1}$, but in this paper we use $\mathsf{g}_{\mu, \nu}^i$ in the Taylor series following~\cite{PT}).

  Define $h = \mathcal{A}_1^{-1}\circ \mathcal{A}_2$. We wish to replace $F_2$ by $\tilde{F}_2 = F_2 \circ h$ so that $\mathrm{Image}(\tilde{F}_2) = \mathrm{Image}(F_1)$.
  In order for $\tilde{F}_2$ to be semitoric and isomorphic to $F_2$ the crucial  point is to show that $h(x,y) = (x, f(x,y))$ for some smooth function $f$.
  By~\cite[Theorem 3.8]{VN2007} $h$ has this form but \emph{a priori} $f$ is not smooth.
  If the systems are simple an argument in~\cite{PVN2009} shows that the fact that $F_1$ and $F_2$ have the same invariants (there are five invariants~\cite[Definition 6.1]{PVN2009}) implies that $h$ is smooth~\cite[Claim 7.1]{PVN2009}.
  The argument is unchanged away from the focus-focus values, so the proof of $h$ being a diffeomorphism can be referred to~\cite{PVN2009} except for the smoothness near a focus-focus value $c_i$, which we explain next.

  By the fact that the transition Taylor series of Equation~\eqref{eq:transitionTaylor} are the same for the two systems, the local diffeomorphisms $(E^i_\nu)_1$ and $(E^i_\nu)_2$ (from Section~\ref{sec:taylorseries}) can be chosen to be equal
  up to a transformation which is infinitely tangent to the identity and leaves the first variable unchanged.
  In part 2 of the proof of~\cite[Lemma 5.1]{VN2003}, it is shown that any such local diffeomorphism of $\R^2$ with those properties
  can be lifted to an automorphism of the local model of the focus-focus point, and thus by composing with that automorphism
  we may assume that $(E^i_\nu)_2 = (E^i_\nu)_1$.

  Considering Equation~\eqref{eq:deftildes}, the fact that the action Taylor series are the same for the two systems, and the fact that $(E^i_\nu)_2 = (E^i_\nu)_1$, we may invoke~\cite[Lemma 2.18]{PT} to conclude that $\mathcal{A}_2^2$ and $\mathcal{A}_1^2$ are equal up to a flat function.
 To justify the smoothness of $h$ near the focus-focus point $c_i$, we apply Lemma~\ref{lem:PT-localdiff} taking $\tau_1 = \mathrm{d}\mathcal{A}_1 - \sigma$ and $\tau_2 = \mathrm{d}\mathcal{A}_2 - \sigma$.
 Since $\tau_2-\tau_1 = \der (\mathcal{A}_2 - \mathcal{A}_1)$ is closed and flat, Lemma~\ref{lem:PT-localdiff} implies that there is a local diffeomorphism $G$ from a neighborhood $U$ of $c_i$ in $F_1(M_1)$ to a neighborhood of the corresponding focus-focus value in $F_2(M_2)$ such that $G^*(\tau_2 + \sigma) = \tau_1 + \sigma$ and thus
    \begin{equation}\label{eqn:EdA2} 
   G^*\der \mathcal{A}_2 = \der \mathcal{A}_1
  \end{equation} 
in $U \setminus \ell_i$, where $\ell_i$ is the vertical line through $c_i$ in $F_1(M_1)$.
  Thus, Equation~\eqref{eqn:EdA2} implies that $G^*\mathcal{A}_2 - \mathcal{A}_1$ is locally constant in $U\setminus \ell_i$, and since the function $G^*\mathcal{A}_2 - \mathcal{A}_1$ is continuous across the vertical line $\ell_i$, it is constant in $U$.    
  Since
  \begin{equation*}
    \lim_{(x,y)\to c_i}(G^*\mathcal{A}_2 - \mathcal{A}_1)(x,y)=\mathcal{A}_2(G(c_i)) - \mathcal{A}_1(c_i)=0;
  \end{equation*}
  this implies that $G^*\mathcal{A}_2 - \mathcal{A}_1=0$ in $U$.
  Thus we obtain that $G^*\mathcal{A}_2 = \mathcal{A}_1$ and therefore $h = G^{-1}$, so $h$ is smooth in a neighborhood of the focus-focus value $c_i$, as desired; see Figure~\ref{fig:smoothness}.
  This completes \emph{Step 1} from~\cite{PVN2009}
  (in~\cite{PVN2009} the authors also discuss the necessity that the two systems have equal twisting index, which is not something we need to
  consider in our case since that information is now encoded in the new Taylor series).
  Thus we may, and do, assume that $F_2(M_2)=F_1(M_1)$.

  \begin{figure}[ht]
    \centering
    \inputfigure{smoothness}
    \caption{A diagram of the maps involved in the argument that $h$ is smooth in the proof of Proposition~\ref{prop:uniqueness}.}
    \label{fig:smoothness}
  \end{figure}

  In \emph{Step 2} of~\cite{PVN2009} it is proven that the semitoric systems $F_1$ and $F_2$ can be intertwined by symplectomorphisms using~\cite[Theorem 2.1]{VN2003} on the preimages $F^{-1}_i(\Omega_{\alpha})$, $i\in\{1,2\}$, $\alpha \in I$, where the collection of sets $\Omega_{\alpha}$ is a convenient covering of the common base $F_1(M_1) = F_2(M_2)$.
  The sets of the covering are defined in such a way that they are of four types: 1) contain no critical points of the $F_i$, 2) contain critical points of rank $1$ but not rank $0$, 3) contain a critical point of rank $0$ of elliptic type; 4) contain a critical point of rank $0$ of focus-focus type.
  In our case the construction of the covering $\Seq{\Omega_{\alpha}}_{\alpha \in I}$ is identical to~\cite{PVN2009}, as well as how to construct the symplectomorphisms $\varphi_{\alpha}, \alpha \in I$ such that $F_1 = F_2\circ \varphi_\alpha$ in cases 1), 2) and 3).
  For case 4) the symplectomorphism $\varphi_{\alpha}$ can be constructed as follows: instead of using~\cite[Theorem 2.1]{VN2003}, which gives a semi-local normal form for fibers containing exactly one focus-focus point, we use~\cite[Theorem 3.9]{PT}, which gives a semi-local normal form for fibers which contain any finite number of focus-focus points, with Equation~\eqref{eq:tilde-to-notilde}, which shows how to extract the invariant from~\cite{PT}  from the complete semitoric invariant.

  The proof in~\cite{PVN2009} concludes with \emph{Step 3} in which it is proven how to glue symplectically the semi-local symplectomorphisms in order to produce a global symplectomorphism $\varphi \colon M_1 \to M_2$.
  This step is unchanged in our case, since the existence of multiple focus-focus points in the same fiber does not play a role in the proof: only the local symplectomorphisms constructed in Step 2 are needed.
\end{proof}

\begin{remark}\label{rmk:map-injective}
 Proposition~\ref{prop:uniqueness} has two implications. The implication from left to right was discussed in Remark~\ref{rmk:quotientmap}.
 The other implication is equivalent to saying that the map $\mathrm{i}$ from Equation~\eqref{eqn:i} in Remark~\ref{rmk:quotientmap} is injective.
\end{remark}

\subsection{Existence} \label{sec:existence}

Recall that a vertex $v$ of a polygon in $\R^2$ is \emph{smooth} if the polygon is convex in a neighborhood of $v$ and inwards pointing normal vectors to two edges meeting at $v$ can be chosen so that they span the integral lattice $\Z^2$.
Also, given a polygon $\De$ we call the set 
$
 \{(x,y)\in\De\mid y\leq y'\textrm{ for all $y'$ such that }(x,y')\in\De\}
$
the \emph{lower boundary of $\De$} and we call the set 
$
 \{(x,y)\in\De\mid y\geq y'\textrm{ for all $y'$ such that }(x,y')\in\De\}
$
the \emph{upper boundary of $\Delta$}, see Figure~\ref{fig:upper-lower-boundary}.

\begin{figure}[ht]
\centering
\inputfigure{upper-lower-boundary}
  \caption{The upper and lower boundaries of a non-compact polygon $\Delta$.}
  \label{fig:upper-lower-boundary}
\end{figure}

\begin{definition} \label{def:ingred}
  Let $\mathbf{X}_{\lamf, \vf}$ be as in Equation~\eqref{eqn:defX}.
  A \emph{complete semitoric ingredient} is any element $\mathcal{I}$ 
  of the set $\coprod_{\lamf,\vf\geq 0} \left(\mathbf{X}_{\lamf,\vf}/(\Z^{\lambda_\mathrm{f}+1}\times\R)\right)$
  such that if $\mathcal{I}\in\mathbf{X}_{\lamf,\vf}/(\Z^{\lambda_\mathrm{f}+1}\times\R)$ is of the form
  \begin{equation*}
    \mathcal{I} = (\Z^{\lamf+1} \times \R) \cdot \left(\De, (\ell_{j_a})_{a = 1}^\lamf, (w_a)_{a = 1}^\lamf, (\tilde{c}_i)_{i = 1}^\vf, \left( m_i, [\tilde{\mathsf{s}}^i_\mu, \mathsf{g}^i_{\mu, \nu}]_{\mu, \nu \in \Z_{m_i}}\right)_{i = 1}^\vf\right)
  \end{equation*}
then the following properties are satisfied:
  \begin{enumerate}[itemsep = 0pt]
    \item \label{item:inequal} $\vf = \lamf = 0$ or $\vf\geq\lamf\geq 1$;
    \item \label{item:finiteheight} $\De \cap \ell_{j}$ is compact for all $j \in \R$, where $\ell_{j}$ is as in Definition~\ref{def:VPIA-group};
    \item \label{item:ck_lines} the entries of the $\vf$-tuple $(\tilde{c}_i)_{i=1}^{v_\mathrm{f}}$ are distinct, ordered
    lexicographically, and contained in $\mathrm{int}(\De) \cap \left( \bigcup_{a = 1}^\lamf \ell_{j_a}\right)$, and moreover $\ell_{j_a} \cap \Seq{\tilde{c}_i}_{i = 1}^\vf\neq\varnothing$ for all $a \in \Seq{1, \dotsc, \lamf}$; 
    \item \label{item:smooth} every vertex of $\De$ in $\De \setminus \left(\cup_{a=1}^{\lambda_\mathrm{f}}\ell_{j_a}\right)$ is smooth;
    \item \label{item:hidden} for every $a \in \Seq{1, \dotsc, \lamf}$ if $P \in \partial\De \cap \ell_{j_a}$ then:
    \begin{enumerate}[nosep]
      \item if $P$ is in the lower boundary of $\De$ then $\mathsf{t}_{j_a}^{-w_a}(\De)$ either has no vertex at $Q=\mathsf{t}_{j_a}^{-w_a}(P)$ or the vertex at $Q$ is smooth;
      \item if $P$ is in the upper boundary of $\De$ then $\mathsf{t}_{j_a}^{-(w_a+\tilde{m}_{j_a})}(\De)$ either has no vertex at $Q=\mathsf{t}_{j_a}^{-(w_a+\tilde{m}_{j_a})}(P)$ or the vertex at $Q$ is smooth, where $\tilde{m}_{j_a} = \sum_{i, c_i \in \ell_{j_a}} m_i$;
    \end{enumerate}
    \item \label{item:constterm} if for every $i\in\{1,\ldots, \vf\}$ we let $\tilde{c}_i = (\tilde{c}_i^1,\tilde{c}_i^2)$, then $(\tilde{\mathsf{s}}^i_\mu)^{(0,0)} = 2\pi \tilde{c}_i^2$ for all $\mu \in \Z_{m_i}$;
    \item \label{item:taylorcond} for every $i\in\{1,\ldots,\vf\}$ the tuple $[\tilde{\mathsf{s}}^i_\mu, \mathsf{g}^i_{\mu, \nu}]_{\mu,\nu\in\Z_{m_i}}$ satisfies the conditions in Equation~\eqref{eq:invariant-constraint}.
  \end{enumerate}
\end{definition}

In view of Definition~\ref{def:ingred} we denote by $\mathbf{X}$ the set
of complete semitoric ingredients, which is a proper subset
of $\coprod_{\lambda_\mathrm{f},v_\mathrm{f}\in\Z_{\geq 0}} \left( \mathbf{X}_{\lamf, \vf} / (\Z^{\lambda_\mathrm{f}+1}\times\R)\right)$.

\begin{remark}
  Definition~\ref{def:ingred} generalizes to non-simple semitoric systems the Pelayo--V\~{u} Ng\d{o}c semitoric list of ingredients given by~\cite[Definition 4.5]{PVN2011}.
\end{remark}

An example of a complete semitoric ingredient appears in Figure~\ref{fig:complete-invariant}.

\begin{proposition}
  Let $(M, \om, F)$ be a semitoric system.
  Then $(\Z^{\lamf+1} \times \R) \cdot \tilde{\mathrm{i}}(M, \om, F)$, where $\tilde{\mathrm{i}}(M, \om, F)$ is given in Equation~\eqref{eq:object}, satisfies conditions (1)-(7) in Definition~\ref{def:ingred}.
\end{proposition}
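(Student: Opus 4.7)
The plan is to verify the seven conditions of Definition~\ref{def:ingred} one at a time, relying on the constructions from Section~\ref{sec:invariants} and on the local normal forms for non-degenerate critical points. Since the conditions are largely independent, I would treat them in roughly increasing order of difficulty: first the essentially definitional items (1), (3), and (7); then the short computations (2) and (6); and finally the substantive geometric conditions (4) and (5).

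Conditions (1), (3), and (7) are immediate from the setup. For (1), $J = \proj_1 \circ F$, so the first-coordinate projection restricts to a surjection $F(M_{\mathrm{f}}) \to J(M_{\mathrm{f}})$, giving $\vf \geq \lamf$ with equality $\vf = \lamf = 0$ occurring simultaneously. Condition (3) is built into the construction of the $\tilde{c}_i = \mathcal{A}(c_i)$: they are distinct by injectivity of $\mathcal{A}$, lexicographically ordered because $\mathcal{A}$ is orientation preserving and preserves the first coordinate, interior to $\De$ because focus-focus values lie in $\mathrm{int}(B)$ (the boundary values of $F$ correspond only to elliptic critical points), and each $j_a$ is selected precisely so that $\ell_{j_a} \cap F(M_{\mathrm{f}}) \neq \varnothing$. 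Condition (7) is the content of the lemma stated immediately after Definition~\ref{def:Taylor_series}. For (2), properness of $J$ implies $J^{-1}(j)$ is compact for every $j \in \R$, so $\De \cap \ell_j = \mathcal{A}(F(J^{-1}(j)))$ is a continuous image of a compact set and hence compact. For (6), I would evaluate Equation~\eqref{eq:def-S} at $c_i \in \ell_{j_a}$: $\mathcal{A}^2(c_i) = \tilde{c}_i^2$, the charts $E^i_\mu$ send $c_i$ to $0 \in \C$, $K_+$ extends continuously to zero with $K_+(0) = 0$, and the Heaviside term vanishes because $x - j_a = 0$. Hence the smooth extension of $\tilde{S}^i$ provided by Lemma~\ref{lem:skA-smooth} takes the value $2\pi \tilde{c}_i^2$ at $c_i$, which is exactly the constant term $(\tilde{\mathsf{s}}^i_\mu)^{(0,0)}$ in Equation~\eqref{eq:deftildes}.

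The real work lies in conditions (4) and (5), the Delzant-type smoothness of $\De$. Away from the cut lines $\mathcal{A}$ is a genuine integral-affine diffeomorphism onto its image by Proposition~\ref{prop:affinecoords}(3), so any vertex of $\De$ in $\De \setminus \bigcup_a \ell_{j_a}$ is the $\mathcal{A}$-image of a rank-zero critical point of $F$ which must be elliptic-elliptic (focus-focus values are excluded and no hyperbolic blocks are allowed); Eliasson's normal form renders $F$ locally isomorphic to the standard toric model near such a point, and transport through the integral-affine chart turns the resulting Delzant corner into a smooth vertex of $\De$, establishing (4). For (5), the crucial observation is that $\mathsf{t}_{j_a}^{-w_a}$ (respectively $\mathsf{t}_{j_a}^{-(w_a + \tilde{m}_{j_a})}$) is exactly the operator that cancels the wall-crossing monodromy of Equation~\eqref{eq:limitT} along the lowest segment $\ell_{j_a}^0$ (respectively the highest segment $\ell_{j_a}^{s_a}$, using the identity $w_a^{s_a} = w_a + \tilde{m}_{j_a}$ obtained by telescoping $w_a^{\alpha+1} - w_a^\alpha = m_{r_a + \alpha}$). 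After this correction, the left- and right-hand restrictions of $\mathcal{A}$ assemble into a single integral-affine chart in a neighborhood of $Q$, and the local geometry at $Q$ reduces to the elliptic situation already handled: either the boundary continues smoothly through $Q$ (no genuine vertex), or $Q$ is the image of an elliptic-elliptic point whose Delzant corner pushes forward to a smooth vertex. Combined with (4) this settles (5); the overall argument closely parallels the corresponding verification for simple systems in~\cite{PVN2009, PVN2011}, the only new bookkeeping being that a single cut line may now carry several marked points with multiplicities summing to $\tilde{m}_{j_a}$.
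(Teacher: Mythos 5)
Your proof is correct and follows essentially the same approach as the paper's: a case-by-case verification of the seven items using properness of $J$, the interior location of focus-focus values, the fact that $\mathcal{A}\circ F$ is a toric momentum map away from the cut lines (and, after applying $\mathsf{t}_{j_a}^{-w_a}$ or $\mathsf{t}_{j_a}^{-(w_a+\tilde{m}_{j_a})}$, also across the lowest and highest segments), evaluation of $\tilde{S}^i$ at $c_i$, and citation of the constraint relations from~\cite{PT}. The only difference is that you spell out a few steps the paper leaves implicit, such as the telescoping identity $w_a^{s_a}=w_a+\tilde{m}_{j_a}$ and the continuity of $K_+$ at the origin.
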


\begin{proof}
  Item~\eqref{item:inequal} holds since the cardinality of $F(M_{\mathrm{f}})$ is $\vf$ and the cardinality of $\proj_1(F({M_\mathrm{f}}))$ is $\lamf$.
  Let $\mathcal{A} = (\mathcal{A}^1, \mathcal{A}^2)$ be the choice of piecewise affine coordinates as in
  Definition~\ref{def:affinecoords} such that $\De = \mathcal{A}(B)$.

  Item~\eqref{item:finiteheight} holds because $J=\mathcal{A}^1\circ F$ is proper.

  Item~\eqref{item:ck_lines} holds because the $\tilde{c}_i$ are obtained as the images under $\mathcal{A}$ of the focus-focus values, which lie in $\mathrm{int}(\De)$, and $j_1, \dotsc, j_{\lamf}$ are defined as the elements of $\proj_1(\{ c_1,\ldots,c_{v_\mathrm{f}}\}) = \proj_1(\{ \tilde{c}_1,\ldots,\tilde{c}_{v_\mathrm{f}}\})$.

  Item~\eqref{item:smooth} is immediate since $\mathcal{A}\circ F \colon M \to \R^2$ is a toric momentum map away from $F^{-1}(\bigcup_{a=1}^\lamf \ell_{j_a})$.

  Similarly, Item~\eqref{item:hidden} follows from the fact that if the wall-crossing index is zero for some segment of $\ell_{j_a}$ then the piecewise affine coordinates are smoothly continued across that region of the wall, so again $\mathcal{A}\circ F$ is  locally a toric momentum map.
  The polygons $\mathsf{t}_{j_a}^{-w_a}(\De)$ and $\mathsf{t}_{j_a}^{-(w_a+\tilde{m}_{j_a})}(\De)$ considered in the two parts of Item~\eqref{item:hidden} are formed by choosing different piecewise affine coordinates for which the wall-crossing index near $\mathsf{t}_{j_a}^{-w_a}(P)$,
  respectively $\mathsf{t}_{j_a}^{-(w_a+\tilde{m}_{j_a})}(P)$, is zero.

  Item~\eqref{item:constterm} holds because 
  $
   (\tilde{\mathsf{s}}^i_\mu)^{(0,0)} = \tilde{S}^i(c_i) = 2\pi \mathcal{A}^2(c_i) = 2\pi \tilde{c}_i^2
  $
  by Equation~\eqref{eq:def-S} and the fact that $\tilde{c}_i = \mathcal{A}(c_i)$.

  Item~\eqref{item:taylorcond} follows from Lemma~\ref{lem:invariant-constraint}.
\end{proof}

The following extends~\cite[Theorem 4.6]{PVN2011} to the non-simple case.

\begin{proposition} \label{prop:existence}
  Given a complete semitoric ingredient $\mathcal{I}$, as in Definition~\ref{def:ingred}, there exists a semitoric system $(M, \om, F)$ such that the complete semitoric invariant of $(M, \om, F)$ is $\mathcal{I}$.
\end{proposition}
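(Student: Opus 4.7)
The plan is to adapt the existence proof of~\cite[Theorem 4.6]{PVN2011} to the non-simple case, using~\cite[Theorem 1.1]{PT} at each focus-focus fiber in place of~\cite[Theorem 2.1]{VN2003}. Fix a representative
\[
\left(\De, (\ell_{j_a})_{a=1}^\lamf, (w_a)_{a=1}^\lamf, (\tilde{c}_i)_{i=1}^\vf, \left(m_i, [\tilde{\mathsf{s}}^i_\mu, \mathsf{g}^i_{\mu, \nu}]_{\mu, \nu \in \Z_{m_i}}\right)_{i=1}^\vf\right)
\]
of $\mathcal{I}$. I would build $(M,\om,F)$ in three steps: a toric scaffold $(M',\om',\mu')$ modelled on a suitable unfolding of $\De$, semi-local models $W_i$ around each focus-focus fiber, and a symplectic gluing.

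For the scaffold, for each line $\ell_{j_a}$ and each segment $\ell_{j_a}^\alpha$ with wall-crossing index $w_a^\alpha$ (determined by $w_a$ and the multiplicities via $w_a^{\alpha+1}-w_a^\alpha = m_{r_a+\alpha}$), unfold $\De$ by applying $T^{w_a^\alpha}$ to the portion of $\De$ in the strip to the right of $\ell_{j_a}^\alpha$. Conditions~\eqref{item:smooth} and~\eqref{item:hidden} of Definition~\ref{def:ingred} are precisely what is needed for every vertex of the resulting polygon $\De'$ to be Delzant, and condition~\eqref{item:finiteheight} supplies the properness needed for the (non-compact) Delzant construction to yield a symplectic $4$-manifold $(M',\om')$ with proper toric momentum map $\mu'\colon M'\to \De'$ whose first component has $2\pi$-periodic flow.

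For the semi-local models, condition~\eqref{item:taylorcond} ensures the data $(\mathsf{s}^i_\mu, \mathsf{g}^i_{\mu, \nu})_{\mu, \nu \in \Z_{m_i}}$ obtained from $[\tilde{\mathsf{s}}^i_\mu, \mathsf{g}^i_{\mu, \nu}]$ via Equation~\eqref{eq:tilde-to-notilde} satisfies the admissibility hypotheses of~\cite[Theorem 1.1]{PT}, which then realizes it by a semi-local model $(W_i,\om_i,F_i)$ whose critical fiber contains exactly $m_i$ focus-focus points. Condition~\eqref{item:constterm} fixes the height of the model so that its focus-focus value lies at $\tilde{c}_i$, while condition~\eqref{item:inequal} excludes degenerate enumerations. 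To glue, excise a small punctured fibered neighborhood of each focus-focus fiber from $M'$ (the relevant fiber is regular in the toric scaffold because the focus-focus value is a node only of the affine structure being built, not of $\mu'$) and patch in $W_i$ along the common regular boundary. Since both sides carry identical integral affine structures by construction, Arnold--Liouville action-angle coordinates produce the required fiber-preserving symplectomorphism, which can be adjusted by the torus action to intertwine the periodic Hamiltonians; patching over all $i$ produces $(M,\om,F=(J,H))$.

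The main obstacle is verifying that the output has $\mathcal{I}$ as its complete semitoric invariant. Running the extraction procedure of Section~\ref{sec:invariants} on $(M,\om,F)$ must recover: (a) the polygon $\De$ with lines $(\ell_{j_a})$ and lower wall-crossing indices $(w_a)$, which follows because the affine structure of $B$ agrees with the unfolded one on $\De'$ by construction; (b) the prescribed focus-focus values $\tilde{c}_i$ and multiplicities $m_i$, by construction of each $W_i$; and (c) the prescribed Taylor series $[\tilde{\mathsf{s}}^i_\mu, \mathsf{g}^i_{\mu, \nu}]$, which follows from the uniqueness statement in~\cite[Theorem 1.1]{PT} applied semi-locally to the image of each $W_i$, combined with Equation~\eqref{eq:tilde-to-notilde} and Lemma~\ref{lem:Gaction-Taylor}. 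Different auxiliary choices made during the construction (cut directions, representative of $\mathcal{I}$) change $\tilde{\mathrm{i}}(M,\om,F)$ only by the action of $\Z^{\lamf+1}\times\R$, so the resulting complete semitoric invariant is well-defined and equal to $\mathcal{I}$.
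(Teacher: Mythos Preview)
Your proposal has a genuine structural gap in the ``toric scaffold'' step, and the surgery you describe cannot be carried out as stated.

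First, the unfolding procedure is not well-defined. The segments $\ell_{j_a}^\alpha$ for different $\alpha$ lie on the same vertical line but carry different wall-crossing indices $w_a^\alpha$; applying $T^{w_a^\alpha}$ to ``the strip to the right of $\ell_{j_a}^\alpha$'' forces you to act by different powers of $T$ on overlapping (or horizontally adjacent) regions of $\De$. The result is a piecewise-linear map that is discontinuous along the rays $\{x>j_a,\ y=\tilde c_i^2\}$, so there is no single polygon $\De'$ to which the Delzant construction can be applied. Conditions~\eqref{item:smooth} and~\eqref{item:hidden} control behavior only at boundary vertices and say nothing about these interior tears.

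Second, and more fundamentally, the excise-and-patch surgery fails because of monodromy. Over a small circle around a regular value of $\mu'$, the Lagrangian torus fibration of a toric scaffold has \emph{trivial} affine monodromy; over a small circle around the singular value of the model $(W_i,\om_i,F_i)$ with $m_i$ pinch points, the monodromy is $T^{m_i}$. These two Lagrangian $T^2$-fibrations over $S^1$ are not isomorphic (indeed not even diffeomorphic for $m_i>0$), so no fiber-preserving symplectomorphism of the type you invoke via action-angle coordinates can exist. Your sentence ``both sides carry identical integral affine structures by construction'' is therefore incorrect: the two sides differ precisely by the focus-focus monodromy, which is a global obstruction and cannot be absorbed by a torus translation.

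This is exactly why the paper, following~\cite{PVN2011}, does \emph{not} build a global toric model. Instead it first fixes the representative with all $w_a=0$ so that $\De\setminus\bm{\ell}^{\mathrm{nonzero}}$ is connected, builds local toric pieces over a covering of that simply-connected region, and only then attaches each semi-local focus-focus model $(M_i,\om_i,F_i)$ from~\cite[Theorem 1.1]{PT} via the explicit map $\mathcal{A}_i$ of Equation~\eqref{eqn:existence_affine}. That formula is the missing ingredient in your sketch: it encodes both the wall-crossing across $\ell_{j_a}$ and the twisting-index data hidden in $\tilde{\mathsf{s}}^i_\mu$, and it is what makes the gluing along the cut (rather than around a full circle) possible. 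The remaining ``filling in the gaps'' and ``recovering smoothness'' stages then handle the portions of the $\ell_{j_a}$ with nonzero wall-crossing index by temporarily switching representatives.
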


\begin{proof}
  Given a complete semitoric ingredient $\mathcal{I}$ choose a representative such that $w_a = 0$ for $a\in\{1,\ldots, \lamf\}$ so that
  \begin{equation*}
    \mathcal{I} = (\Z^{\lamf+1} \times \R) \cdot \left( \De, (\ell_{j_a})_{a = 1}^\lamf, (0)_{a = 1}^\lamf, (\tilde{c}_i)_{i = 1}^\vf, \left( m_i, [\tilde{\mathsf{s}}^i_\mu, \mathsf{g}^i_{\mu, \nu}]_{\mu, \nu \in \Z_{m_i}}\right)_{i = 1}^\vf\right).
  \end{equation*}
  Note that such a choice of representative always exists because the action of $\Z^{\lambda_\mathrm{f}+1}\times\R$ can be used to make the tuple of
  lower wall-crossing indices take any desired value, as seen in Equation~\eqref{eq:Gj-action}.
  Now we continue as in the proof of~\cite[Theorem 4.6]{PVN2011}, which proceeds by gluing together the semi-local models of the fibers of $F$, 
  essentially constructing $(M, \om, F)$ backwards starting from $\De$
  and using the semi-local models and symplectic gluing to construct
  $(M,\om)$ and a map $\mu\colon M\to\De$ which will represent $F\circ \mathcal{A}$
  for some choice of piecewise affine coordinates $\mathcal{A}$.
  The proof of~\cite[Theorem 4.6]{PVN2011} is split into four stages and we will consider each separately.

  In the preliminary stage (\emph{a convenient covering}) and first stage (\emph{away from the cuts}) of the proof of~\cite[Theorem 4.6]{PVN2011} one constructs a convenient covering $\Seq{\Omega_{\alpha}}_{\alpha \in I}$ of $\Delta$.
  In~\cite{PVN2011}, following~\cite{VN2007}, the polygon invariant is constructed by choosing rays in the momentum map image known as \emph{cuts} which
  go either up or down from each focus-focus point, and then finding a toric momentum map on the manifold with preimages under $F$ of these cuts removed.

  In the first stage one restricts to the subcovering $\Seq{\Omega_{\alpha}}_{\alpha \in I'}$ of sets which do not intersect the cuts, and for each of these constructs a local symplectic model $M_{\alpha}$ and an integrable system 
  \[
  F_{\alpha} \colon M_{\alpha} \to \Omega_{\alpha}.
  \]
  In the language of the present paper, we replace the cuts referred to above by $\bm{\ell}^{\mathrm{nonzero}}$, where $\bm{\ell}^{\mathrm{nonzero}}$ is the union of the portions of the lines $\ell_{j_a}$, $a\in\{1,\ldots,\lambda_\mathrm{f}\}$, which have non-zero wall-crossing index, as in Definition~\ref{def:wallcrossing}.
  Since we have chosen a representative for which the all of the lower wall-crossing indices are zero, the set $\De \setminus \bm{\ell}^{\mathrm{nonzero}}$ is connected.
  After making this choice, the remainder of the first stage continues exactly as in~\cite[pages 113--116]{PVN2011}.
  Using the general symplectic gluing theorem~\cite[Theorem 3.11]{PVN2011} these local regular integrable models can be be symplectically glued together in order to produce an integrable system 
  \[
  F_{I'} \colon M_{I'} \to \bigcup_{\alpha \in I'} \Omega_{\alpha}
  \]
  over the union of the subcovering $\Seq{\Omega_{\alpha}}_{\alpha \in I'}$ of open sets which do not intersect
  $\bm{\ell}^{\mathrm{nonzero}}$.

  In the second stage (\emph{attaching focus-focus fibrations}) of~\cite[pages 116--118]{PVN2011} it is explained how to symplectically glue the semi-local models in a neighborhood of the focus-focus fibers containing exactly one focus-focus point to the model  $F_{I'} \colon M_{I'} \to \bigcup_{\alpha \in I'} \Omega_{\alpha}$ of Step 1, to produce a proper map $F_{I''} \colon M_{I''} \to \bigcup_{\alpha \in I''}\Omega_{\alpha}$ on the symplectic manifold $M_{I''}$, which is a smooth toric momentum map away from the pre-images of the cuts.
  Here $\Seq{\Omega_{\alpha}}_{\alpha \in I''}$ is the subcovering containing all of $\Seq{\Omega_{\alpha}}_{\alpha \in I'}$
  and additionally containing those elements of $\Seq{\Omega_{\alpha}}_{\alpha \in I}$ which contain exactly one focus-focus value.

  This same construction can be done for non-simple semitoric systems taking into account the following: fix some $i\in\{1,\ldots, \vf\}$ and consider the marked point $\tilde{c}_i = (\tilde{c}_i^1,\tilde{c}_i^2) \in \De$.
  Using~\cite[Theorem 3.9]{PT} construct the semi-local model $(M_i, \omega_i, F_i)$ over a neighborhood of the origin in $\R^2$ using the invariant $(\mathsf{s}^i_\mu, \mathsf{g}^i_{\mu, \nu})_{\mu, \nu \in \Z_{m_i}}$ obtained from $\mathcal{I}$ as in Equation~\eqref{eq:tilde-to-notilde}, well-defined up to cyclic reordering of the indices.
  Let $B_i= F_i(M_i)$ be the base and let $(p^i_\mu)_{\mu \in \Z_{m_i}}$ be the tuple of focus-focus points of $F_i$, where we assume that $F_i(p^i_\mu) = 0$ for all $\mu \in \Z_{m_i}$.
  For each $\mu \in \Z_{m_i}$ there is a symplectomorphism 
  \[
   \varphi^i_\mu \colon (M_i, \omega_i, p^i_\mu) \to (\R^4, \omega_0, 0)
  \] and a diffeomorphism $E^i_\mu \colon (B_i, 0) \to (\R^2, 0)$ such that $q \circ \varphi^i_\mu = E^i_\mu \circ F_i$.
  Let $\tilde{S}^i \colon B_i \to \R$ be a smooth function such that Equation~\eqref{eq:deftildes} holds for one choice of $\mu$, and due to the relations~\eqref{eq:invariant-constraint} it thus holds for all choices of $\mu$.
  In order to obtain a system with the desired Taylor series invariants define $\mathcal{A}_i = (\mathcal{A}_i^1,\mathcal{A}_i^2) \colon B_i \to \R^2$ by
  \begin{align} \label{eqn:existence_affine}
  \left\{ \begin{array}{l}
    \mathcal{A}_i^1(x,y)  =   x + \tilde{c}_i^1,  \\
    \mathcal{A}_i^2(x,y)  =   \frac{1}{2\pi}\tilde{S}^i + \frac{1}{2\pi} \sum_{\nu \in \Z_{m_i}} (E^i_\nu)^* K_+ - w_a^{\alpha} (\tilde{c}^2_i-x) \cdot \Heaviside_{(\tilde{c}^2_i-x)},
   \end{array}\right.
  \end{align}
  where $\tilde{c}_i$ is the $\alpha^{\mathrm{th}}$ marked point on the line $\ell_{\tilde{c}^1_i}$, counting up from the bottom.
  The map $\mathcal{A}_i$ is invariant under cyclic reordering of the indices in the Taylor series.

  Now we use $\mathcal{A}_i \circ F_i \colon M_i \to \De$ to place $B_i$ into a neighborhood of $\tilde{c}_i$ in the polygon $\Delta$, and we perform gluing as in~\cite[pages 116--118]{PVN2011}.
  Here note that $\mathcal{A}_i\circ F_i$ replaces the map $R_\alpha\circ g_i\circ F_i$ from~\cite[page 117]{PVN2011}, in which $g_i$ is a smooth diffeomorphism of $\R^2$ analogous to $\mathcal{A}_i$ and the map $R_\alpha$ was used to account for the twisting index, which in the present proof is already accounted for in the piecewise affine coordinates $\mathcal{A}_i$ since the information of the twisting index is included in the new Taylor series $[\tilde{\mathsf{s}}^i_\mu, \mathsf{g}^i_{\mu, \nu}]_{\mu, \nu \in \Z_{m_i}}$.
  From Equation~\eqref{eqn:existence_affine} it follows that the the Taylor series obtained from the constructed system will be the desired one, since
  isolating $\tilde{S}^i$ in Equation~\eqref{eqn:existence_affine} yields the definition of the desired action Taylor series as in Equation~\eqref{eq:def-S}.

  In the third (\emph{filling in the gaps}) of the proof one considers the remaining open sets $\Omega_{\alpha}$ in the covering, which are those including the cuts but not any focus-focus values, and includes them into the previous gluing data using symplectic gluing in order to obtain a symplectic manifold and a proper map $\mu \colon M \to \bigcup_{\alpha \in I}\Omega_{\alpha}$ with image $\Delta$.
  This map $\mu$ is a proper toric smooth momentum map only away from the cuts, and in the 
  fourth and final stage (\emph{recovering smoothness}) the authors show how to smoothen $\mu$.
  In the case of non-simple semitoric systems these final two stages proceed exactly as in~\cite{PVN2011}, using different choices of representative for $\mathcal{I}$ in order to make the wall-crossing index of the vertical lines $\ell_{j_a}$, $a\in\{1,\ldots,\lambda_\mathrm{f}\}$, equal to zero around the remaining points to be glued in, which are the points on the lines $\ell_{j_a}$ which do not already have zero wall-crossing index in the representative of $\mathcal{I}$ we started with.
\end{proof}

\begin{remark}\label{rmk:convexity}
  Let $\mathcal{I}$ be as in Definition~\ref{def:ingred}.
  Not every polygon $\De$ such that
  \begin{equation*}
    \mathcal{I} = (\Z^{\lamf+1} \times \R) \cdot \left( \De, (\ell_{j_a})_{a = 1}^\lamf, (w_a)_{a = 1}^\lamf, (\tilde{c}_i)_{i = 1}^\vf, \left( m_i, [\tilde{\mathsf{s}}^i_\mu, \mathsf{g}^i_{\mu, \nu}]_{\mu, \nu \in \Z_{m_i}}\right)_{i = 1}^\vf\right)
  \end{equation*}
  is convex, but the conditions on the vertices (Items~\eqref{item:smooth} and~\eqref{item:hidden}) imply that the polygon associated to a representative
  is convex if $w_a^0 \leq 0$ and $w_a^{s_a} \geq 0$ for $1 \leq a \leq \lamf$, where $s_a$ is the number of focus-focus values in line $\ell_{j_a}$.
  For instance, the polygon associated to any representative for which $w_0 = \dotsb = w_a = 0$ is convex.
  One could restrict to only the convex representatives without losing any information, but there is no 
  natural reason to exclude the other polygons so we keep all representatives as the invariant.
  In Section~\ref{sec:equiv-classification} we discuss an alternative (but equivalent)
  way to encode the complete semitoric invariant in which all polygons are convex.
\end{remark}

\begin{remark}
  In~\cite{PVN2011} the authors describe \emph{hidden} and \emph{fake corners} of the polygon, which represent the two possible cases in Item~\eqref{item:hidden} above.
  A vertex which occurs on a line $\ell_{j_a}$ is a fake corner if there is no vertex there after changing the piecewise affine coordinates so that the adjacent wall-crossing index is zero, and such a vertex is a hidden corner if there is a smooth vertex remaining after changing to the appropriate coordinates.
  In Figure~\ref{fig:complete-invariant} the bottom right vertex on the line $\ell_{j_3}$ is hidden, since the slope of the bottom boundary
  changes by $5$ even though the adjacent wall-crossing index is only $-4$, and the rest of the vertices on the lines $\ell_{j_1}$, $\ell_{j_2}$, and $\ell_{j_3}$
  are fake corners, since the changes in slope correspond to the adjacent wall-crossing indices.
\end{remark}

\begin{remark}\label{rmk:map-surjective}
Proposition~\ref{prop:existence} says that the injective
 map $\mathrm{i}\colon \mathcal{M}/{\sim}\to\mathbf{X}$ from Equation~\eqref{eqn:i} discussed in Remarks~\ref{rmk:quotientmap} and~\ref{rmk:map-injective} is also surjective.
\end{remark}

\subsection{Classification}
 \label{sec:mainresult}

The following classification generalizes the Pelayo--V\~{u} Ng\d{o}c classification of simple semitoric systems~\cite{PVN2009, PVN2011} by allowing the fibers of $J$ (and hence of $F$) to have multiple focus-focus points per fiber.
This includes fibers such as $F^{-1}(c_2)$ and $F^{-1}(c_3)$ in Figure~\ref{fig:fibers}. The proof follows from Propositions~\ref{prop:welldef}, \ref{prop:uniqueness}, and~\ref{prop:existence}. 

\begin{theorem} \label{thm:classification}
  For each complete semitoric ingredient as in Definition~\ref{def:ingred} there exists a semitoric integrable system with that as its complete semitoric invariant as in Definition~\ref{def:complete}.
  Moreover, two semitoric systems are isomorphic if and only if they have the same complete semitoric invariant.
\end{theorem}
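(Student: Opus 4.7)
The theorem decomposes naturally into an existence statement (the first sentence) and a classification statement (the ``moreover'' clause), and my plan is to obtain each as a direct assembly of the propositions already established in Sections~\ref{ssec:uniqueness} and~\ref{sec:existence}, rather than redoing anything by hand. The theorem itself is therefore essentially a bookkeeping statement: it repackages those propositions into the clean bijective correspondence between isomorphism classes of semitoric systems and complete semitoric ingredients.

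For the existence half, I would invoke Proposition~\ref{prop:existence} verbatim: given any complete semitoric ingredient $\mathcal{I}$, that proposition produces a semitoric system $(M,\om,F)$ whose complete semitoric invariant is exactly $\mathcal{I}$, via the four-stage backwards construction that glues semi-local models along a convenient covering of $\Delta$ after choosing a representative with all lower wall-crossing indices equal to zero. Nothing further is required for this direction.

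For the classification half, the plan proceeds in two steps. First, I would cite Proposition~\ref{prop:welldef} to guarantee that the assignment $(M,\om,F)\mapsto (\Z^{\lamf+1}\times\R)\cdot\tilde{\mathrm{i}}(M,\om,F)$ is genuinely a well-defined function, so the phrase ``same complete semitoric invariant'' in the statement is meaningful. Second, I would apply Proposition~\ref{prop:uniqueness}, which already states the desired ``if and only if'' relating isomorphism of semitoric systems to equality of complete semitoric invariants. Combined with Proposition~\ref{prop:existence}, these identify the induced map $\mathrm{i}\colon \mathcal{M}/{\sim}\to \mathbf{X}$ of Remark~\ref{rmk:quotientmap} as a bijection onto the set of complete semitoric ingredients, which is precisely the content of the theorem (see also Remarks~\ref{rmk:map-injective} and~\ref{rmk:map-surjective}).

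The main obstacle is therefore not in the theorem itself but in the building blocks already proved: the smoothness of $h=\mathcal{A}_1^{-1}\circ \mathcal{A}_2$ near focus-focus values in Step 1 of Proposition~\ref{prop:uniqueness} (where equality of the action Taylor series is used through~\cite[Lemma 6.8]{PT} and~\cite[Lemma 5.1]{VN2003}) and the attachment of focus-focus fibrations in the second stage of Proposition~\ref{prop:existence} (where~\cite[Theorem 1.1]{PT} replaces~\cite[Theorem 2.1]{VN2003} so as to accommodate fibers with several focus-focus points and the twisting-index information now encoded in $\tilde{\mathsf{s}}^i_\mu$). Once those are in place, assembling Theorem~\ref{thm:classification} is a one-line formality.
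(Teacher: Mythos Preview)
Your proposal is correct and matches the paper's own proof exactly: the theorem is stated to follow directly from Propositions~\ref{prop:welldef}, \ref{prop:uniqueness}, and~\ref{prop:existence}, which is precisely the assembly you describe. No further argument is needed beyond citing these three propositions.
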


\begin{figure}[ht]
\centering
\inputfigure{fibers}
  \caption{Focus-focus fibers of a semitoric system $F\colon M\to \R^2$ where
  $M$ is a symplectic $4$-manifold. The piecewise affine coordinates $\mathcal{A}$ (as in Definition~\ref{def:affinecoords}) map the momentum map image
  onto a polygon. The system shown has $v_\mathrm{f} = 3$, $\lambda_\mathrm{f}=3$, $m_1 = 1$, $m_2=5$, and $m_3=7$.}
  \label{fig:fibers}
\end{figure}

\begin{remark}
The proof method of Theorem~\ref{thm:classification} follows closely, as we have seen, the articles~\cite{PVN2009, PVN2011}. The new contribution consists of making sure that the proof strategy can be implemented with the more general semitoric invariant, which as we have seen involves certain difficulties, both conceptual and technical (due precisely to the presence of fibers with multiple pinched points).

In the present paper 
we described how the symplectic
invariants in Defintion~\ref{def:Taylor_series} (the Taylor series invariants) constructed in~\cite{PT} relate to the original construction of the twisting index invariant. 
In Section~\ref{sec:taylorseries}, we showed how
they can be naturally packaged together into a single invariant which mixes the information of both original invariants  
(in~\cite{Jaume-thesis} the invariants of simple semitoric systems were combined in a similar way).

In Section~\ref{ssec:semitoricpolygon}, we described how the multipinched fibers
change the induced affine structure, and that our analogue of the ``polygonal invariant" of the Pelayo--V\~{u} Ng\d{o}c classification (the third invariant) may no longer be convex (Remark~\ref{rmk:convexity}), as illustrated in Figure~\ref{fig:complete-invariant}. 
Note that there is no deep reason for the non-convexity of this polygon. We have allowed for any valid integral affine coordiantes
in each vertical strip, but by requiring certain compatibility of the coordinates in each strip we could obtain a set of convex polygons, just as in
the case of simple semitoric systems.
This is the case in Section~\ref{sec:equiv-classification}, in which we use Theorem~\ref{thm:classification} to prove Theorem~\ref{thm:equiv-classification}, which states that semitoric systems may equivalently be classified by an invariant which only includes convex polygons.
The polygonal invariant was the complete invariant of the classification of compact toric systems due to Atiyah--Guillemin--Sternberg--Delzant~\cite{At82,GS82,De88}, because compact toric systems cannot have focus-focus points;  see~\cite{Pesurvey, GuSj2005} for an expository account. 
For the case of non-compact toric systems see~\cite{KaLe2015}.
  It would be interesting to extend the classification to systems having ``hyperbolic triangles" as in  \cite{DP2016, HP-extend} and \cite[Section 6.6]{LFPfamilies}.
\end{remark}

\begin{remark}
 \label{rmk:recover}
In the case that $(M,\om,F)$ is a simple semitoric system the five original invariants (1)--(5) 
from~\cite{PVN2009,PVN2011} can be obtained from the complete semitoric invariant given in Definition~\ref{def:complete}.
(1) The \emph{number of focus-focus points} is equal to $\lamf = \vf = m_{\mathrm{f}}$. 
(2) From the Taylor series labels on each
focus-focus value in the complete semitoric invariant one can extract the Taylor series invariant 
from~\cite{PT} via Equation~\eqref{eq:tilde-to-notilde}, which in the simple case is determined
by a single series $\tilde{\mathsf{s}}_{0}^i$ for each focus-focus value $c_i$.
The relationship between this series and the \emph{Taylor series invariant} $(S_i)^\infty$
from~\cite{PVN2009} is 
\[
 \widetilde{\mathsf{s}}_{0}^i (X,Y) = (S_i)^\infty (Y,X) + \frac{\pi}{2} X \pmod{2\pi X}
\]
where the addition of $(\pi / 2) X$ is due to a change in convention
between~\cite{PT} and~\cite{VN2003}, as discussed in~\cite[Remark 3.10]{PT}. 
(3) The \emph{semitoric polygon invariant} in~\cite{PVN2009}
is obtained by taking the image of a toric momentum map defined on the complement in $M$ of the preimages
under $F$ of rays which start at each focus-focus value and go either up or down, these are known as ``cuts'' in the base space.
These polygons correspond to the subset of images of piecewise affine coordinates $\mathcal{A} = (\mathcal{A}^1,\mathcal{A}^2)$ (as in Definition~\ref{def:affinecoords}) such that
for each $a\in \{1,\ldots, \lambda_\mathrm{f}\}$ the lower wall-crossing index $w_a$ satisfies either 
$w_a=0$ (corresponding to an upwards cut) or $w_a = -1$ (corresponding to a downwards cut).
(4) The \emph{height invariant} $\mathsf{h}^i$ of the focus-focus value $c_i = (c_i^1,c_i^2)$ is the distance from
the marked point $\tilde{c}_i$ to the bottom of the corresponding polygon, obtained by
 \[\mathsf{h}^i = \frac{1}{2\pi}\left(\tilde{\mathsf{s}}_{0}^i \right)^{(0, 0)} - \min\limits_{\ell_{c_i^1}\cap \De} \mathcal{A}^2.\]
By Lemma~\ref{lem:Gaction-Taylor}, $\mathsf{h}^i$ does not depend on the choice of $\mathcal{A}$.
(5) Finally, the \emph{twisting index invariant} $\mathsf{k}^i_{\mathrm{classical}}$ was originally
defined in~\cite{PVN2009} by comparing $\mathcal{A}\circ F$ with a local preferred momentum map, and is essentially the integer part
of $\frac{1}{2\pi}(\tilde{\mathsf{s}}_{0})^{(1,0)}$, but again there is a shift by
$(\pi/2)X$, so the twisting index invariant of $c_i$ is given as
\[
 \mathsf{k}^i_{\mathrm{classical}} = \left\lfloor \frac{1}{2\pi} \left( (\tilde{\mathsf{s}}_{0}^i)^{(1,0)} - \frac{\pi}{2} \right) \right\rfloor + \frac{\epsilon_i-1}{2}
\]
where $\lfloor\cdot\rfloor\colon\R\to\Z$ is the usual floor function
and $\epsilon_i = +1$ if the cut at $c_i$ is upward and $\epsilon_i = -1$ if the
cut at $c_i$ is downward.
Note that this integer label on each $c_i$ does depend on the choice of piecewise affine coordinates, since
changing piecewise affine coordinates can shift the coefficient of $X$ in $\tilde{\mathsf{s}}_{0}^i$
by an integer multiple of $2\pi$, as is seen in Lemma~\ref{lem:Gaction-Taylor}.
The last term of $\mathsf{k}^i_{\mathrm{classical}}$ is there so that it is preserved under a change
in cut direction at $c_i$,
in~\cite{PVN2009} the dependence of the preferred momentum map on the cut direction was
designed so that this would hold.
For further discussion of the twisting index see Section~\ref{sec:twist}.
\end{remark}

\section{An equivalent classification}
\label{sec:equiv-classification}

The complete semitoric invariant can also be encoded in a way which is more consistent with the original classification~\cite{PVN2009, PVN2011}, which we present here.
We will now define a new object, a \emph{marked labeled semitoric polygon}, and then define a bijective map from the set of these objects to the set of complete semitoric invariants. From there, we apply Theorem~\ref{thm:classification} to obtain a bijection between the set of marked labeled semitoric polygons and the set of isomorphism classes of semitoric systems, simple or not.

Let $v$ be a vertex of a convex polygon, and let $u_1,u_2\in\Z^2$ be the primitive vectors directing the edges emanating from $v$. Let $s\in\Z_{>0}$. Then:
\begin{itemize}[noitemsep]
    \item we say that $v$ satisfies the \emph{$s$-fake} condition if $\det(u_1,T^su_2)=0$; and
    \item we say that $v$ satisfies the \emph{$s$-hidden} condition if $\det(u_1,T^su_2)=\pm 1$.
\end{itemize}
For $\epsilon\in\{-1,1\}$ and $c = (c^1,c^2)\in\R^2$, define
\[
 \ell_c^\epsilon = \{(x,y)\in\R^2 \mid x=c_1 \text{ and }\epsilon y 
 \geq \epsilon c_2\},
\]
so $\ell_c^\epsilon$ is the ray starting at $c$ which goes up if $\epsilon=1$ and down if $\epsilon = -1$.
We call these rays \emph{cuts}.


\begin{definition}\label{def:markedpoly-rep}
A \emph{marked labeled semitoric polygon representative} is a tuple \[\left( \De, (c_{n})_{{n}=1}^\mf, (\epsilon_n)_{n=1}^\mf, \left( m_i, [\tilde{\mathsf{s}}^i_\mu, \mathsf{g}^i_{\mu, \nu}]_{\mu, \nu \in \Z_{m_i}}\right)_{i = 1}^\vf\right)\] where
$\De$ is a convex polygon such that
\begin{enumerate}[noitemsep]
    \item \label{item:markedpoly1} $\mf\geq 0$, $\vf = \#\{c_n\}_{n=1}^\mf$,  $\{\tilde{c}_i\}_{i=1}^\vf = \{c_n\}_{n=1}^\mf$, the $\tilde{c}_i$ are distinct and in lexicographic order, and \[m_i = \#\{n\in\{1,\ldots,\mf\} \mid c_n = \tilde{c}_i\};\] 
    \item  $\De \cap \ell_{j}$ is compact for all $j \in \R$, where $\ell_{j}$ is as in Definition~\ref{def:VPIA-group};
    \item $c_1,\ldots,c_\mf\in\mathrm{int}(\De)$ and $c_1,\ldots,c_\mf$ are in lexicographic order;
    \item each point $v$ of $\partial \De \cap \left(\cup_i \ell_{c_i}^{\epsilon_i}\right)$ is a vertex which satisfies either the $s$-hidden or $s$-fake corner condition, where
    \[
        s = \#\{i\in\{1,\ldots, \mf\}\mid v\in \ell_{c_i}^{\epsilon_i}\}
    \]
    and all other vertices of $\De$ are smooth;
    \item  if for every $n\in\{1,\ldots, \mf\}$ we let $c_n = (c_n^1,c_n^2)$, then $(\tilde{\mathsf{s}}^i_\mu)^{(0,0)} = 2\pi c_n^2$ for all $\mu \in \Z_{m_i}$ whenever $c_n = \tilde{c}_i$;
    \item \label{item:markedpoly-last}  for every $i\in\{1,\ldots,\vf\}$ the tuple $[\tilde{\mathsf{s}}^i_\mu, \mathsf{g}^i_{\mu, \nu}]_{\mu,\nu\in\Z_{m_i}}$ satisfies the conditions in Equation~\eqref{eq:invariant-constraint}.
\end{enumerate}
\end{definition}

Notice that in Definition~\ref{def:markedpoly-rep} we have not assumed that the $c_i$ are distinct.
Furthermore, notice that the polygon $\De$ above is now required to be \emph{convex}.
A marked labeled semitoric polygon representative can be thought of as a single convex polygon with a finite number of marked points, some of which may be equal, and cuts going up or down out of each marked point, with each \emph{distinct} marked point labeled by Taylor series information (which now includes the data of the twisting index invariant).

Let $G^\mf = \{1,-1\}^\mf$ and let $\mathcal{T}$ denote the set of integral affine transformations of $\R^2$ which preserve the first component, so $\mathcal{T} = \{\shift_b\circ T^{z_0} \mid b\in\R, z_0\in\Z\}$.
Then $G^\mf\times\mathcal{T}$ acts on a marked labeled semitoric polygon representative by 
\begin{align}\label{eqn:markedpoly-groupaction} ((\epsilon_n')_{n=1}^\mf,\shift_b\circ T^{z_0}) \cdot &\left( \De, (c_n)_{n=1}^\mf, (\epsilon_n)_{n=1}^\mf, \left( m_i, [\tilde{\mathsf{s}}^i_\mu, \mathsf{g}^i_{\mu, \nu}]_{\mu, \nu \in \Z_{m_i}}\right)_{i = 1}^\vf\right) =\\ \nonumber
&\left( \tau(\De), (\tau(c_n))_{n=1}^\mf, (\epsilon_n'\epsilon_n)_{n=1}^\mf, \left( m_i, [\tau(\tilde{\mathsf{s}}^i_\mu), \mathsf{g}^i_{\mu, \nu}]_{\mu, \nu \in \Z_{m_i}}\right)_{i = 1}^\vf\right)
\end{align}
where \[\tau = \shift_b\circ \mathsf{t}_{\pi_1(c_1)}^{u_1}\circ\ldots\circ \mathsf{t}_{\pi_1(c_\mf)}^{u_\mf}\circ T^{z_0}\]
and $u_n = (\epsilon_n+\epsilon_n \epsilon_n')/2$ for $n\in\{1,\ldots,\mf\}$.

\begin{definition}\label{def:markedpoly}
A \emph{marked labeled semitoric polygon} is the orbit of a marked labeled semitoric polygon representative (as in Definition~\ref{def:markedpoly-rep}) under the group action given in Equation~\eqref{eqn:markedpoly-groupaction}.
Let $\mathbf{Y}$ denote the set of marked labeled semitoric polygons.
\end{definition}
It can be checked that every representative of a marked labeled semitoric polygon satisfies the conditions \eqref{item:markedpoly1}--\eqref{item:markedpoly-last} above to be a marked labeled semitoric polygon representative.

Now we can define the bijection between marked labeled semitoric polygons and the complete semitoric invariant.
Recall that $\mathbf{X}$ denotes the set of complete semitoric ingredients.
Let $\Phi\colon \mathbf{Y}\to\mathbf{X}$ be the map which sends 
the marked labeled semitoric polygon
\begin{equation}\label{eqn:markedpolymap-1}
(G^s\times\mathcal{T})\cdot \left( \De, (c_n)_{n=1}^\mf, (\epsilon_n)_{n=1}^\mf, \left( m_i, [\tilde{\mathsf{s}}^i_\mu, \mathsf{g}^i_{\mu, \nu}]_{\mu, \nu \in \Z_{m_i}}\right)_{i = 1}^\vf\right)
\end{equation}
to the complete semitoric invariant
\begin{equation}\label{eqn:markedpolymap-2}
(\Z^{\lamf+1} \times \R) \cdot \left(\De, (\ell_{j_a})_{a = 1}^\lamf, (w_a)_{a = 1}^\lamf, (\tilde{c}_i)_{i = 1}^\vf, \left( m_i, [\tilde{\mathsf{s}}^i_\mu, \mathsf{g}^i_{\mu, \nu}]_{\mu, \nu \in \Z_{m_i}}\right)_{i = 1}^\vf\right)
\end{equation}
where $\{j_1,\ldots,j_{\lamf}\} = \{\pi_1(c_1),\ldots,\pi_1(c_\mf)\}$ with $j_1<\ldots < j_{\lamf}$, and
\[
 w_a = \#\{n \in \{1,\ldots, \mf\} \mid \epsilon_n = -1 \text{ and } c_n \in \ell_{j_a}\}
\]
for $a\in\{1,\ldots, \lamf\}$.
That is, $w_a$ counts the number of downwards cuts in the line $\ell_{j_a}$.
It is straightforward to see that the resulting object satisfies the conditions given in Definition~\ref{def:ingred} to be a complete semitoric ingredient and that it is independent of the choice of representative (and thus well-defined).

Using Theorem~\ref{thm:classification}, we are now ready to prove:

\begin{theorem}\label{thm:equiv-classification} 
Let $\mathrm{i}$ be the map from Equation~\eqref{eqn:i} which sends an isomorphism class of semitoric systems to the associated complete semitoric invariant, and let $\Phi$ be the map described in Equations~\eqref{eqn:markedpolymap-1} and~\eqref{eqn:markedpolymap-2} which sends a complete semitoric invariant to the associated marked labeled semitoric polygon.
 Then \[\Phi^{-1}\circ \mathrm{i}\colon \mathcal{M}/{\sim} \,\to \mathbf{Y}\] is a bijection between the set of isomorphism classes of semitoric systems and the set of marked labeled semitoric polygons.
\end{theorem}

\begin{proof}
By Theorem~\ref{thm:classification} we know that $\mathrm{i}$ is a bijection, and thus to prove the theorem it is sufficient
to prove that $\Phi$ is a bijection. It is clear that $\Phi$ is surjective.

To prove that $\Phi$ is injective suppose that it sends two elements of $\mathbf{X}$ to the same element of $\mathbf{Y}$, and we will show that the two elements of $\mathbf{X}$ are equivalent up to the action of $G^s\times \mathcal{T}$.
It is immediate that, up to the action of $G^s\times\mathcal{T}$, the two elements of $\mathcal{M}/\sim$ have the same polygon,
the same marked points, and the same Taylor series information.
Thus we may write them as
\[
(G^s\times\mathcal{T})\cdot \left( \De, (c_n)_{n=1}^\mf, (\epsilon_n^{p})_{n=1}^\mf, \left( m_i, [\tilde{\mathsf{s}}^i_\mu, \mathsf{g}^i_{\mu, \nu}]_{\mu, \nu \in \Z_{m_i}}\right)_{i = 1}^\vf\right)
\]
for $p\in \{1,2\}$ where 
\begin{equation}\label{eqn:wall-crossing-equivalent}
 \#\{n \in \{1,\ldots, \mf\} \mid \epsilon_n^1 = -1, c_n \in \ell_{j_a}\} = \#\{n \in \{1,\ldots, \mf\} \mid \epsilon_n^2 = -1, c_n \in \ell_{j_a}\}
\end{equation}
for each $a \in \{1,\ldots, \lamf\}$. It may not be true that $\epsilon_n^1 = \epsilon_n^2$ for all $n\in\{1,\ldots,\mf\}$, but we will now show that we can arrange this by passing to an equivalent representative in a way that doesn't change any of the other components of the invariant.

Let $a \in \{1,\ldots, \lamf\}$ and let $c_{n_1}, c_{n_2}\in \ell_a$.
Suppose that $\epsilon_{n_1}^2 = - \epsilon_{n_2}^2$, so the corresponding cuts are in opposite directions.
 Define
$
 \epsilon' = (\epsilon_1',\ldots, \epsilon_s') \in G^s 
$
by
\[
 \epsilon_n' = \begin{cases} -1, & \text{ if }n = n_1 \text{ or }n=n_2 \\ 1, & \text{ otherwise},\end{cases}
\]
so the action of $\epsilon'$ switches the directions of the $n_1^{\text{th}}$ and $n_2^{\text{th}}$
cuts and leaves all other cuts unchanged.
Then, following Equation~\eqref{eqn:markedpoly-groupaction}, we see that $\epsilon'$ acts on $\De$, $(c_n)_{n=1}^{\mf}$, and the Taylor series labels by $\tau = \mathsf{t}_{j_a}^{1}\circ\mathsf{t}_{j_a}^{-1}$ which is the identity map.
We conclude that interchanging the cut directions of two marked points which lie on the same vertical line and leaving all other components unchanged yields a marked labeled semitoric polygon which is equivalent under the action of $G^s\times\mathcal{T}$.

For each $a \in \{1,\ldots, \lamf\}$ by Equation~\eqref{eqn:wall-crossing-equivalent} we see that the total number of downwards cuts in the line $\ell_a$ is equal in each element, so there exists a permutation $\sigma$ of $\{n\in\{1,\ldots, \mf\} \mid c_n \in \ell_{j_a}\}$ such that $\epsilon_n^1 = \epsilon_{\sigma(n)}^2$ for all such $n$. 
The permutation $\sigma$ can be written as a sequence of transpositions.
Due to the conclusion of the previous paragraph, and using $\sim$ to denote equivalence up to the action of $G^s\times\mathcal{T}$, this implies that  
\begin{align*}
\left( \De, (c_n)_{n=1}^\mf, (\epsilon_n^{1})_{n=1}^\mf, \left( m_i, [\tilde{\mathsf{s}}^i_\mu, \mathsf{g}^i_{\mu, \nu}]_{\mu, \nu \in \Z_{m_i}}\right)_{i = 1}^\vf\right) 
&\sim \left( \De, (c_n)_{n=1}^\mf, (\epsilon_{\sigma(n)}^{1})_{n=1}^\mf, \left( m_i, [\tilde{\mathsf{s}}^i_\mu, \mathsf{g}^i_{\mu, \nu}]_{\mu, \nu \in \Z_{m_i}}\right)_{i = 1}^\vf\right)\\
&= \left( \De, (c_n)_{n=1}^\mf, (\epsilon_n^{2})_{n=1}^\mf, \left( m_i, [\tilde{\mathsf{s}}^i_\mu, \mathsf{g}^i_{\mu, \nu}]_{\mu, \nu \in \Z_{m_i}}\right)_{i = 1}^\vf\right).
\end{align*}
Thus, $\Phi$ is indeed injective, and therefore bijective, completing the proof.
\end{proof}


\begin{remark}
 In the context of a marked labeled semitoric polygon, the wall-crossing index $w_a^\alpha$ as in Definition~\ref{def:wallcrossing} can be interpreted as the signed count of cuts through the given point. The downwards cuts contribute a -1 and the upwards cuts contribute a +1.
\end{remark}

\section{Application to studying the twisting index invariant} \label{sec:twist}
 
Let $c_i$ be a focus-focus value. 
Considering Equation~\eqref{eq:tilde-to-notilde}, we see that given the semi-local invariant $[\mathsf{s}_\mu^i,\mathsf{g}_{\nu,\mu}^i]_{\nu,\mu\in\Z_{m_i}}$ (as in~\cite{PT}) of the fiber $F^{-1}(c_i)$ this determines $[\tilde{\mathsf{s}}_\mu^i,\mathsf{g}_{\nu,\mu}^i]_{\nu,\mu\in\Z_{m_i}}$ \emph{a priori} up to adding an integer multiple of $2\pi X$ to each action Taylor series $\tilde{\mathsf{s}}_{0},\ldots,\tilde{\mathsf{s}}_{m_{i}-1}$ and translating each of those Taylor series by a constant.
In fact, in this section we will see that given $[\mathsf{s}_\mu^i,\mathsf{g}_{\nu,\mu}^i]_{\nu,\mu\in\Z_{m_i}}$ the set of possible choices of $[\tilde{\mathsf{s}}_\mu^i,\mathsf{g}_{\nu,\mu}^i]_{\nu,\mu\in\Z_{m_i}}$ is naturally isomorphic to $\Z\times\R$, where the discrete freedom corresponds to the choice of twisting index data for the fiber.

We now directly generalize the twisting index and height invariants from~\cite{PVN2009} to the non-simple case. An in-depth study of the twisting index in the case of simple systems is given in~\cite{AHP-twist}.
\begin{definition} \label{def:twist-height}
  Write $c_i = (c_i^1,c_i^2)$, let $(p_\mu^i)_{\mu \in \Z_{m_i}}$ be the tuple of focus-focus points in $F^{-1}(c_i)$, and fix a choice of piecewise affine coordinates $\mathcal{A} = (\mathcal{A}^1,\mathcal{A}^2)$ as in Definition~\ref{def:affinecoords}. 
  We define the \emph{twisting index invariant of $p_\mu^i$ relative to $\mathcal{A}$} by
  \begin{equation} \label{eqn:twist-def}
    \mathsf{k}_\mu^i = \left\lfloor \frac{1}{2\pi} (\tilde{\mathsf{s}}_\mu^i)^{(1,0)} \right\rfloor
  \end{equation}
  and the \emph{height invariant of $c_i$} by
  \begin{equation} \label{eqn:height-def}
    \mathsf{h}^i = \frac{1}{2\pi} (\tilde{\mathsf{s}}_{0}^i)^{(0,0)}-\min_{\ell_{c_i^1}\cap\De} \mathcal{A}^2.
  \end{equation}
\end{definition}

Note that $\mathsf{k}_\mu^i$ depends on the choice of $\mathcal{A}$ but $\mathsf{h}^i$ does not (by Lemma~\ref{lem:Gaction-Taylor}),
and note that $\tilde{\mathsf{s}}_0^i$ can be replaced by any $\tilde{\mathsf{s}}_\mu^i$, $\mu\in\Z_{m_i}$, in Equation~\eqref{eqn:height-def} without changing the value of $\mathsf{h}^i$.
In~\cite{PVN2009} the twisting index is defined by comparing the toric momentum map $\mathcal{A}\circ F$ to a local preferred momentum map, which is nearly equivalent to the definition given in Equation~\eqref{eqn:twist-def} but differs by a shift due to a slight change of convention between~\cite{VN2003} and~\cite{PT}, see Remark~\ref{rmk:twist-convention}.
Let $\Psi\colon \R[[X,Y]]/(2\pi X\Z) \to \R [[X,Y]]$ be the right inverse of the map $\tilde{\mathsf{s}} \mapsto \tilde{\mathsf{s}}-\tilde{\mathsf{s}}^{(0,0)}+2\pi X \Z$ determined by the requirement that $(\Psi (\mathsf{s}))^{(1,0)}\in [0,2\pi)$ for all $\mathsf{s}\in\R [[X,Y]]/(2\pi X \Z)$.
Then
\begin{equation} \label{eqn:twist-tilde1} 
  \tilde{\mathsf{s}}_\mu^i = \Psi(\mathsf{s}_\mu^i) + 2\pi \mathsf{k}_\mu^i X + 2\pi \mathsf{h}^i
\end{equation}
which follows immediately from the definitions of $\mathsf{k}_\mu^i$ and $\mathsf{h}^i$.

The following proposition explains how in some cases only partial data can
be enough to completely recover the semi-local invariants and twisting indices.

\begin{proposition} \label{prop:twistprop}
  Let $c_i$ be a focus-focus value such that $F^{-1}(c_i)$ contains $m_i\in\Z_{>0}$ focus-focus points denoted $(p_\mu^i)_{\mu\in\Z_{m_i}}$. 
  Let $\mathcal{A}$ be a choice of piecewise affine coordinates as in Definition~\ref{def:affinecoords}, and let $\mathsf{k}_\mu^i$ denote the twisting index of $p_\mu^i$ relative to $\mathcal{A}$ as in Definition~\ref{def:twist-height}.
  Let $[\mathsf{s}_\mu^i,\mathsf{g}_{\mu, \nu}^i]_{\mu, \nu \in \Z_m}$ denote the Taylor series invariant as in~\cite{PT}, which can be obtained from the invariant given in Definition~\ref{def:Taylor_series} via Equation~\eqref{eq:tilde-to-notilde}.
  Then $\mathsf{k}^i_0$, $\mathsf{s}^i_0$, and $(\mathsf{g}^i_{\mu, \mu+1})_{\mu \in \Z_{m_i}}$ determine the entire tuple $(\mathsf{k}^i_\mu, \mathsf{s}^i_\mu, \mathsf{g}^i_{\mu, \nu})_{\mu, \nu \in \Z_{m_i}}$.
  That is, given $\mathsf{k}^i_0$, $\mathsf{s}^i_0$, and $(\mathsf{g}^i_{\mu, \mu+1})_{\mu \in \Z_{m_i}}$ there is exactly one possible way to extend
  to $(\mathsf{k}^i_\mu, \mathsf{s}^i_\mu, \mathsf{g}^i_{\mu, \nu})_{\mu, \nu \in \Z_{m_i}}$ under the assumptions above. 
  In particular, the semi-local invariant $(\mathsf{s}_\mu^i,\mathsf{g}_{\mu, \nu}^i)_{\mu, \nu \in \Z_m}$ and the data of a single twisting index $\mathsf{k}_0^i$ determine the other twisting indices $\mathsf{k}_1^i, \ldots, \mathsf{k}_{m-1}^i$.
\end{proposition}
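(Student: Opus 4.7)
The plan is to reconstruct the entire tuple $(\tilde{\mathsf{s}}_\mu^i,\mathsf{g}_{\mu,\nu}^i)_{\mu,\nu\in\Z_{m_i}}$ from the given data and then read off $\mathsf{k}_\mu^i$ and $\mathsf{s}_\mu^i$ using Definition~\ref{def:twist-height} and Equation~\eqref{eq:tilde-to-notilde}. The input is essentially the minimal set singled out by Remark~\ref{rmk:generatingset}, plus the additional integer $\mathsf{k}_0^i$, so the work is to make the recovery explicit and to check that the height $\mathsf{h}^i$ (which is \emph{not} part of the given data) does not affect the output.

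First I would construct $\mathsf{g}_{\mu,\nu}^i$ for all $\mu,\nu\in\Z_{m_i}$. The cocycle identity $\mathsf{g}_{\mu,\sigma}^i(X,Y)=\mathsf{g}_{\mu,\nu}^i(X,\mathsf{g}_{\nu,\sigma}^i(X,Y))$ from~\eqref{eq:invariant-constraint} lets me iterate the given $\mathsf{g}_{\mu,\mu+1}^i$ to obtain $\mathsf{g}_{\mu,\nu}^i$ for every $\nu>\mu$; the positivity $(\mathsf{g}_{\mu,\nu}^i)^{(0,1)}>0$ guarantees that each $\mathsf{g}_{\mu,\nu}^i$ is compositionally invertible in $Y$, and $\mathsf{g}_{\nu,\mu}^i$ is then the $Y$-compositional inverse of $\mathsf{g}_{\mu,\nu}^i$ thanks to the normalization $\mathsf{g}_{\mu,\mu}^i(X,Y)=Y$.

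Second, I would pick the natural lift $\Psi(\mathsf{s}_0^i)+2\pi\mathsf{k}_0^i X$ as a candidate for $\tilde{\mathsf{s}}_0^i$, with the understanding that the true $\tilde{\mathsf{s}}_0^i$ differs from this candidate by the unknown additive constant $2\pi\mathsf{h}^i$. Applying the relation $\tilde{\mathsf{s}}_\mu^i(X,Y)=\tilde{\mathsf{s}}_0^i(X,\mathsf{g}_{\mu,0}^i(X,Y))$ from~\eqref{eq:invariant-constraint} then produces $\tilde{\mathsf{s}}_\mu^i$ for each $\mu\in\Z_{m_i}$, again modulo the same additive constant. Finally I would extract $\mathsf{k}_\mu^i=\lfloor(\tilde{\mathsf{s}}_\mu^i)^{(1,0)}/(2\pi)\rfloor$ and $\mathsf{s}_\mu^i=\tilde{\mathsf{s}}_\mu^i-(\tilde{\mathsf{s}}_\mu^i)^{(0,0)}+2\pi X\Z$ directly from Definition~\ref{def:twist-height} and Equation~\eqref{eq:tilde-to-notilde}.

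The main point to verify, and the only delicate step, is that this procedure is insensitive to the unknown height constant. Since $\mathsf{g}_{\mu,0}^i(0,0)=0$, adding any $c\in\R$ to $\tilde{\mathsf{s}}_0^i$ adds exactly $c$ to $\tilde{\mathsf{s}}_0^i(X,\mathsf{g}_{\mu,0}^i(X,Y))$, leaving the coefficient $(\tilde{\mathsf{s}}_\mu^i)^{(1,0)}$ (and hence $\mathsf{k}_\mu^i$) unchanged and falling into the equivalence class defining $\mathsf{s}_\mu^i$. Because the given data is assumed to arise from an actual semitoric system, the reconstructed quantities must agree with the true values of $(\mathsf{k}_\mu^i,\mathsf{s}_\mu^i,\mathsf{g}_{\mu,\nu}^i)$, which yields the claimed uniqueness and completes the plan.
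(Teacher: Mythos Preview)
Your proposal is correct and follows essentially the same route as the paper's proof: use $\mathsf{s}_0^i$ and $\mathsf{k}_0^i$ (via Equation~\eqref{eqn:twist-tilde1}) to pin down $\tilde{\mathsf{s}}_0^i$ up to its constant term, propagate to all $\tilde{\mathsf{s}}_\mu^i$ via the relations~\eqref{eq:invariant-constraint} (which is the content of Remark~\ref{rmk:generatingset}), and then read off each $\mathsf{k}_\mu^i$ and $\mathsf{s}_\mu^i$ from the $(1,0)$-coefficient and Equation~\eqref{eq:tilde-to-notilde}. The paper simply phrases the ``unknown height constant'' step as working with the non-constant part $\tilde{\mathsf{s}}_\mu^i-(\tilde{\mathsf{s}}_\mu^i)^{(0,0)}$ throughout, which is equivalent to your ``modulo an additive constant'' bookkeeping; one small cosmetic point is that the hypothesis $\mathsf{g}_{\mu,0}^i(0,0)=0$ is not actually needed to see that adding $c$ to $\tilde{\mathsf{s}}_0^i$ adds $c$ to $\tilde{\mathsf{s}}_0^i(X,\mathsf{g}_{\mu,0}^i(X,Y))$, since a constant in the outer series is unaffected by substitution in $Y$.
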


\begin{proof}
  Using Equation~\eqref{eqn:twist-tilde1}, $\mathsf{s}_0^i$ and $\mathsf{k}_0^i$ determine all terms in $\tilde{\mathsf{s}}_0^i$ except the constant term.
Similarly to~\cite{PT}, the relations in Equation~\eqref{eq:invariant-constraint} imply 
that $\tilde{\mathsf{s}}_{0}^i-(\tilde{\mathsf{s}}_{0}^i)^{(0,0)}$, i.e.~the non-constant terms of
$\tilde{\mathsf{s}}_0^i$, and $(\mathsf{g}^i_{\mu,\mu+1})_{\mu\in \Z_{m_i}\setminus\{m_i-1\}}$ completely determine the entire set $(\tilde{\mathsf{s}}_\mu^i-(\tilde{\mathsf{s}}_{\mu}^i)^{(0,0)},\mathsf{g}^i_{\mu,\nu})_{\mu,\nu\in\Z_{m_i}}$. In turn each $(\tilde{\mathsf{s}}_\mu^i)^{(1,0)}$ can be used to determine $\mathsf{k}_\mu^i$ for $\mu \in \Z_m \setminus \{0\}$ by Equation~\eqref{eqn:twist-def} and $\mathsf{s}_\mu^i$ can be recovered 
from the non-constant terms of $\tilde{\mathsf{s}}_{\mu}^i$ by Equation~\eqref{eq:tilde-to-notilde}.
\end{proof}

In Proposition~\ref{prop:twistprop} we use $(\mathsf{s}^i_\mu, \mathsf{g}^i_{\mu, \nu})_{\mu, \nu \in \Z_{m_i}}$
instead of $[\mathsf{s}^i_\mu, \mathsf{g}^i_{\mu, \nu}]_{\mu, \nu \in \Z_{m_i}}$ so we can specify a single twisting
index $\mathsf{k}_0^i$, but the result also implies that $[\mathsf{s}^i_\mu, \mathsf{g}^i_{\mu, \nu}]_{\mu, \nu \in \Z_{m_i}}$
and any single twisting index, assigned to one
of the elements of $[\mathsf{s}^i_\mu, \mathsf{g}^i_{\mu, \nu}]_{\mu, \nu \in \Z_{m_i}}$, determine the remaining twisting indices.

\begin{remark}\label{rmk:twist-convention}
 Alternately, one can take the definition of the twisting index to be given by 
 \[
  \mathsf{k}_{\mu,\textrm{alternate}}^i = \left\lfloor \frac{1}{2\pi} \left( (\tilde{\mathsf{s}}_\mu^i)^{(1,0)} -\frac{\pi}{2}\right) \right\rfloor
 \]
  to agree with the original definition by Pelayo--V\~{u} Ng\d{o}c~\cite[Definition 5.9]{PVN2009} in the case of simple systems with upward cuts, in which case all of the results in this section hold verbatim after redefining $\Psi$ to have the property that $(\Psi(\mathsf{s}))^{(1,0)}\in \left[\frac{\pi}{2},\frac{5\pi}{2} \right)$.
  Again, this difference is due to the shift by $\frac{\pi}{2}X$ in the definition of the Taylor series between~\cite{PT} and~\cite{VN2003}, see~\cite[Remark 3.10]{PT}.
\end{remark}

\begin{remark}
  Proposition~\ref{prop:twistprop} and Equation~\eqref{eqn:twist-tilde1} imply that for each focus-focus fiber $F^{-1}(c_i)$ the data encoded in $[\tilde{\mathsf{s}}_\mu,\mathsf{g}_{\mu,\nu}]_{\mu,\nu\in\Z_{m_i}}$ is equivalent to the data of the semi-local invariant $[\mathsf{s}_\mu,\mathsf{g}_{\mu,\nu}]_{\mu,\nu\in\Z_{m_i}}$ from~\cite{PT}, the height invariant $\mathsf{h}^i$ from Equation~\eqref{eqn:height-def}, and additionally \emph{one integer}, the twisting index $\mathsf{k}_0^i$ of any one of the focus-focus points in the fiber.
\end{remark}

\section{Example} \label{sec:examples}

An explicit example of a compact semitoric system which includes a twice-pinched torus can be obtained by certain choices of parameters for the system described in Hohloch--Palmer~\cite{HohPal}, which is a generalization of the coupled angular momentum system, see for instance~\cite{LFP2018, ADH}.

\begin{figure}[ht]
  \centering
  \begin{subfigure}[t]{.4\linewidth}
    \centering
    \inputfigure{HP-example-single}
    \caption{If $s_1 = \frac{1}{2} + \varepsilon$ for small $\varepsilon>0$ there are two once-pinched fibers.}
    \label{fig:HPsingle}
  \end{subfigure}\quad
  \begin{subfigure}[t]{.4\linewidth}
    \centering
    \inputfigure{HP-example-double}
    \caption{If $s_1 = \frac{1}{2}$ there is one twice-pinched fiber.}
    \label{fig:HPdouble}
  \end{subfigure}
  \caption{The polygon in a representative of the complete semitoric invariant associated to the system given in Equation~\eqref{eq:HPsystem} for
   $s_1$ close to $\frac{1}{2}$ and $s_1=\frac{1}{2}$. The wall-crossing indices as in Definition~\ref{def:wallcrossing} are indicated to the left
   of the line segments they label, and shown above the marked points are the corresponding focus-focus fibers.
   The arrows indicate that the focus-focus fibers are mapped to the marked points by $\mathcal{A}\circ F$, where $\mathcal{A}$ is the choice
   of piecewise affine coordinates (as in Definition~\ref{def:affinecoords}) associated to the polygon in the figure.}
  \label{fig:HP-example}
\end{figure}

Consider $M = \mathbb{S}^2 \times \mathbb{S}^2$ with coordinates $(x_1, y_1, z_1, x_2, y_2, z_2)$ inherited from the usual inclusion $\mathbb{S}^2 \subset \R^3$ and symplectic form $\om = -(R_1 \om_{\mathbb{S}^2} \oplus R_2 \omega_{\mathbb{S}^2})$ for some parameters $R_1, R_2>0$, where $\om_{\mathbb{S}^2}$ is the usual area form on the sphere giving it area $2\pi$.
The parameters $R_1, R_2$ represent the radii of the spheres and in~\cite{HohPal} it was assumed that $R_1 < R_2$ since taking $R_1 = R_2$ can produce non-simple semitoric systems or even systems which include degenerate singularities (depending on the other parameters).
Using parameters $R_1 = R_2 = 1$ and $s_2 = s_1$, the semitoric system given in~\cite[Theorem 1.2]{HohPal} is 
$
 (\mathbb{S}^2 \times \mathbb{S}^2, -(\om_{\mathbb{S}^2} \oplus \omega_{\mathbb{S}^2}), F_{s_1} = (J, H_{s_1}))
$
where
\begin{equation} \label{eq:HPsystem}
\begin{cases}
  J = z_1 + z_2,\\
  H_{s_1} = (1-s_1)^2 z_1 + s_1^2 z_2 + 2s_1 (1-s_1)(x_1 y_1 + x_2 y_2),
\end{cases}
\end{equation}
and $s_1 \in [0, 1]$ remains a free parameter.
For $s_1$ in a neighborhood of $\nicefrac{1}{2}$ this is a semitoric system with two focus-focus points which occur at $(0, 0, \pm 1, 0, 0, \mp 1)$.
If $s_1 = \nicefrac{1}{2}$ then both of these points are in the same fiber $F^{-1}(0, 0)$, which is thus a twice-pinched torus.
Taking $s_1 = \nicefrac{1}{2} + \varepsilon$, with $\varepsilon\neq 0$ sufficiently small, produces a semitoric system which has two focus-focus points which are in different fibers of $F$, but nevertheless both focus-focus points lie in the same fiber of $J$, so it still does not satisfy the simplicity condition.
Figure~\ref{fig:HP-example} shows the polygon and focus-focus fibers in each case.
The semitoric polygon can be determined from the single representative shown in the figure, which has vertices at $(-2,-1)$, $(0,1)$, $(2,1)$, and $(0,-1)$ in both cases and has wall-crossing indices labeled. Computing
the other invariants is more difficult, see for instance the techniques used in~\cite{ADH, ADHspin}.
It would be interesting to compare the Taylor series invariants for the two separate focus-focus fibers in Figure~\ref{fig:HPsingle} to
the Taylor series invariants for the twice-pinched focus-focus fiber in Figure~\ref{fig:HPdouble}.

\section{Final remarks and open problems}
\label{sec:final}

\subsection{Group actions and moduli problems}
One can think of a symplectic $4$-manifold as the phase space of a
mechanical system, and an action or integrable system on it as an
additional symmetry. 
Therefore, one may view Theorems~\ref{thm:classification} and~\ref{thm:equiv-classification} as
a symplectic classifications of Hamiltonian $(\mathbb{S}^1\times\R)$-actions on
symplectic $4$-manifolds (under the constraints on the types
of singularities which can occur in semitoric systems).
The symplectic classification of Hamiltonian $(\R\times\R)$-actions
is expected to be difficult and essentially corresponds to classifying
integrable systems with two degrees of freedom on symplectic $4$-manifolds. The one degree of freedom case appeared
in~\cite{DuMoTo1994}.
On the other hand, there also exists a symplectic classification
of symplectic $(\mathbb{S}^1\times \mathbb{S}^1)$-actions on $4$-manifolds~\cite{Pelayo-symplecticaction},
and Hamiltonian $\mathbb{S}^1$-actions on compact $4$-manifolds have also been classified~\cite{karshon}. For the case of Hamiltonian completely
integrable torus actions on orbifolds, see~\cite{LerTol1997}
which extends the manifold case from~\cite{De88}.
The existence of these symmetries can have implications on the
topology and geometry of $M$. For instance,
Karshon~\cite{karshon} proved that if $M$ is a compact symplectic $4$-manifold
which admits a Hamiltonian $\mathbb{S}^1$-action then $M$ is K\"ahler (and
hence there are strong constraints on the topology of $M$, 
such as the fundamental group).
In fact, the proof of Delzant's theorem~\cite{De88} shows that
if $M$ is a $2n$-dimensional symplectic manifold which admits a Hamiltonian $n$-torus action then $M$ is a toric variety (and toric
varieties are simply connected, see~\cite{Danilov78} and also
see~\cite{DuPe2009} for an explicit construction of the toric
variety with charts). Recently, there has been interest on extending
Delzant's classification to log-symplectic manifolds~\cite{GuMiPi2014,GuLiPeRa2017}.

Finally, one natural follow up problem to the classification
of this paper is to apply the result to study the structure of the
moduli space that semitoric systems form.
In the case of simple semitoric systems this was done
in~\cite{Pal-moduli2017} and in the case of toric systems in dimension $4$ in 
Pelayo--Pires--Ratiu--Sabatini~\cite{PePiRaSa2014} and later in arbitrary dimension in 
Pelayo--Santos~\cite{PeSa}.
Having fibers with multiple
focus-focus points per fiber makes the problem even more
interesting, as one needs to face the question of deformations
between fibers which include different numbers of
focus-focus points.
The articles~\cite{LFPfamilies,LFPfamilies2} contain a number of results
concerning the behavior of families under variations of parameters.

\subsection{Inverse spectral problems for simple and non-simple systems}\label{sec:quantum}
The inverse spectral problem for integrable systems asks how much information may be obtained from the semiclassical spectrum of the system, in~\cite{Pel22} the second author proposes some related open problems about classical and quantum integrable systems.
Theorem~\ref{thm:classification} includes the data of the twisting index inside of a Taylor series, giving a possible
 strategy to detect the  twisting index from the semiclassical spectrum of a semitoric system. 

It was conjectured by Pelayo--V\~{u} Ng\d{o}c~\cite[Conjecture 9.1]{PVN2011} that from the semiclassical spectrum of a quantum simple semitoric system one can recover the associated simple semitoric system. In view of the classification of simple semitoric systems~\cite{PVN2009,PVN2011}  this amounts to proving that the five invariants of simple semitoric systems can be recovered from the semiclassical spectrum. 

The first four invariants (number of focus-focus values, the Taylor series associated with each focus-focus value,  the polygonal invariant of the system, the height invariant associated with each focus-focus value) can be recovered from the semiclassical spectrum by Pelayo--V\~{u} Ng\d{o}c~\cite{PVN14} and Le Floch--Pelayo--V\~{u} Ng\d{o}c~\cite[Theorem A]{LFPVN2016}, which implies that the conjecture holds for simple semitoric systems within the same twisting-index class and for simple semitoric systems with exactly one focus-focus point (\emph{Jaynes-Cummings systems}) and normalized twisting index at that point (for a detailed study of the Hamiltonian dynamics of the example of simple semitoric system known as coupled-spin oscillator see~\cite{PeVNspin2012,ADHspin} and for their spectral theory see~\cite{PeVNspin2012}).
By Le Floch--V\~{u} Ng\d{o}c~\cite{LFVN} the last invariant may also be recovered from the semiclassical spectrum, which implies the remaining case of the conjecture. In the same paper the authors give algorithms to recover the invariants starting from a semiclassical spectrum which is known to be the semiclassical spectrum of some quantum semitoric system. We refer to~\cite[Section 10]{Pel22} for an expository account. In~\cite[Section 12.3]{Pel22} the construction/surjectivity problem for quantum semitoric systems is proposed, in the spirit of the construction/surjectivity statement of Delzant's classification of compact toric systems (and of the classification of semitoric systems).


A proof of the conjecture for toric systems on compact manifolds of any dimension $2n$ was given by Charles, Pelayo and V\~{u} Ng\d{o}c in~\cite{ChPeVN2013} using in an essential way the Atiyah--Guillemim--Sternberg~\cite{At82, GS82} and Delzant~\cite{De88} results which show that the only invariant of such systems in the image of the system (a convex polytope in $\R^n$).

\subsection{Recovering the system from the affine structure}\label{sec:recover-from-affine}
It is a classical problem in integrable systems to ask under what conditions an integrable system can be recovered from the affine structure on the base of the associated Lagrangian fibration. 
This is related to the question of when a classical integrable system can be recovered from the joint spectra of the associated quantum integrable system (see Section~\ref{sec:quantum} and the references therein). 
In both cases in principle there is no reason to believe that such a recovery is always possible, but to the authors' knowledge
the following question is open: is it possible for two non-isomorphic non-simple semitoric systems which have a multiply-pinched focus-focus fiber to have the same affine manifold as their base or have the same joint spectra of the associated quantum integrable system?
We believe that the answer to this question is ``yes'', and it seems to be the consensus among experts that this question can be answered in the affirmative, but we could not find any specific references in the literature.

\paragraph{Acknowledgements.} 
J.~Palmer thanks Rutgers University for supporting travel related to this paper, and the AMS and the Simons Foundation which also funded a visit to the second author through an AMS-Simons travel grant.
He was also partially supported by the \emph{FWO senior postdoctoral fellowship} 12ZW320N.
 Research on this paper was carried out by \'A. Pelayo at UC San Diego, and later at the Complutense University of Madrid with support of BBVA Foundation Grant for Scientific Research Projects with project title ``From Integrability to Randomness in Symplectic and Quantum Geometry (FITRISAQG)''.
X.~Tang thanks Cornell University, University of Toronto, and Tsinghua University where this paper was completed and thanks the Beijing Institute of Technology for a Research Fund Program for Young Scholars.
  
\bibliographystyle{alpha}
\bibliography{ref}

\bigskip

\authaddresses

\end{document}